\newcommand{\ignore}[1]{}
\def\NZQ{\Bbb}               
\def\NN{{\NZQ N}}
\def\QQ{{\NZQ Q}}
\def\ZZ{{\NZQ Z}}
\def\frk{\frak}               
\def\Phi{{\frk n}}
\def\Phi{{\frk N}}
\def\MM{{\mathcal M}}
\def\MS{{\mathcal S}}
\def\MG{{\mathcal G}}
\def\ab{{\bold a}}
\def\yb{{\bold y}}
\def\zb{{\bold z}}
\def\ub{{\bold u}}
\def\vb{{\bold v}}
\def\tb{{\bold t}}
\def\wb{{\bold w}}
\def\cb{{\bold c}}
\def\td{{\rm td}}
\def\opn#1#2{\def#1{\operatorname{#2}}} 
\opn\chara{char} \opn\length{\ell} \opn\pd{pd} \opn\rk{rk}
\opn\projdim{proj\,dim} \opn\injdim{inj\,dim} \opn\rank{rank}
\opn\depth{depth} \opn\grade{grade} \opn\height{height}
\opn\embdim{emb\,dim} \opn\codim{codim}
\opn\Tr{Tr} \opn\bigrank{big\,rank}
\opn\superheight{superheight}\opn\lcm{lcm}
\opn\trdeg{tr\,deg}
\opn\reg{reg} \opn\lreg{lreg} \opn\ini{in} \opn\lpd{lpd}
\opn\size{size}\opn\bigsize{bigsize}
\opn\cosize{cosize}\opn\bigcosize{bigcosize}
\opn\sdepth{sdepth}\opn\sreg{sreg}
\opn\link{link}\opn\fdepth{fdepth}
\opn\div{div} \opn\Div{Div} \opn\cl{cl} \opn\Cl{Cl}
\opn\Spec{Spec} \opn\Supp{Supp} \opn\supp{supp} \opn\Sing{Sing}
\opn\Ass{Ass} \opn\Min{Min}\opn\Mon{Mon} \opn\dstab{dstab} \opn\astab{astab}
\opn\Ann{Ann} \opn\Rad{Rad} \opn\Soc{Soc}
\opn\Im{Im} \opn\Ker{Ker} \opn\Coker{Coker} \opn\Am{Am}
\opn\Hom{Hom} \opn\Tor{Tor} \opn\Ext{Ext} \opn\End{End}
\opn\Aut{Aut} \opn\id{id} \opn\span{span}
\opn\nat{nat}
\opn\pff{pf}
\opn\Pf{Pf} \opn\GL{GL} \opn\SL{SL} \opn\mod{mod} \opn\ord{ord}
\opn\Gin{Gin} \opn\Hilb{Hilb}\opn\sort{sort}
\opn\aff{aff} \opn\con{conv} \opn\relint{relint} \opn\st{st}
\opn\lk{lk} \opn\cn{cn} \opn\core{core} \opn\vol{vol}
\opn\link{link} \opn\star{star}\opn\lex{lex} \opn\Gr{Gr}
\opn\gr{gr}
\def\pot#1#2{#1[\kern-0.28ex[#2]\kern-0.28ex]}
\opn\dirlim{\underrightarrow{\lim}}
\opn\inivlim{\underleftarrow{\lim}}
\let\to=\rightarrow
\def\Implies{\ifmmode\Longrightarrow \else
        \unskip${}\Longrightarrow{}$\ignorespaces\fi}
\def\implies{\ifmmode\Rightarrow \else
        \unskip${}\Rightarrow{}$\ignorespaces\fi}
\def\iff{\ifmmode\Longleftrightarrow \else
        \unskip${}\Longleftrightarrow{}$\ignorespaces\fi}
\newtheorem{Theorem}{Theorem}[section]
\newtheorem{Lemma}[Theorem]{Lemma}
\newtheorem{Corollary}[Theorem]{Corollary}
\newtheorem{Proposition}[Theorem]{Proposition}
\newtheorem{Remark}[Theorem]{Remark}
\newtheorem{Example}[Theorem]{Example}
\newtheorem{Question}[Theorem]{Question}
\let\epsilon\varepsilon
\let\kappa=\varkappa
\def\qed{\ifhmode\textqed\fi
      \ifmmode\ifinner\quad\qedsymbol\else\dispqed\fi\fi}
\def\textqed{\unskip\nobreak\penalty50
       \hskip2em\hbox{}\nobreak\hfil\qedsymbol
       \parfillskip=0pt \finalhyphendemerits=0}
\def\dispqed{\rlap{\qquad\qedsymbol}}
\opn\dis{dis}
\def\pnt{{\raise0.5mm\hbox{\large\bf.}}}
\opn\Lex{Lex}
\begin{document}

\title{Asymptotic  behavior of Markov complexity of matrices}



\author{Shmuel Onn, Apostolos Thoma  \and Marius Vladoiu}


\address{Shmuel Onn, Technion - Israel Institute of Technology, Haifa, Israel  }
\email{onn@technion.ac.il}
\address{Apostolos Thoma, Department of Mathematics, University of Ioannina, Ioannina 45110, Greece }
\email{athoma@uoi.gr}
\address{Marius Vladoiu, Faculty of Mathematics and Computer Science, University of Bucharest, Str. Academiei 14, Bucharest, RO-010014, Romania, and}
\address{Simion Stoilow Institute of Mathematics of Romanian Academy, Str. Grivita 21, Bucharest 014700, Romania}
\email{vladoiu@fmi.unibuc.ro}

\keywords{Toric ideals, Markov complexity, Graver basis, Lawrence liftings}
\subjclass{MSC 13P10, 05E40, 14M25, 15B36, 62R01}

\maketitle

\begin{abstract} { To any integer matrix $A$ one can associate a matroid
structure consisting of a graph and another integer matrix $A_B$. The connected components of this graph are called bouquets. We prove that bouquets behave well with respect to
the $r$--th  Lawrence liftings of matrices and we use it to prove that the
Markov and Graver complexities of $m\times n$ matrices of rank $d$
 may be arbitrarily large for $n\geq 4$ and $d\leq n-2$. 
In contrast, we show they are bounded in terms of $n$
 and the largest absolute value $a$ of any entry of $A$.\break
 }
\end{abstract}

\section{Introduction}
\label{intro}

Let $A\in{\mathbb Z}^{m\times n}$ be an integer matrix of rank $d$ and  $r\ge 2$. 
The $r$--th {\it Lawrence lifting} of $A$   is denoted by $A^{(r)}$ and is the $(rm+n)\times rn$ matrix
{\footnotesize\[
 A^{(r)}= \begin{array}{c} \overbrace{\quad\quad\quad \quad \quad\ \ }^{r-\textrm{times}} \\
 \begin{pmatrix}
\ A\ & 0 &  \hdots & 0 \\
0 &\ A\ & \hdots & 0 \\
\vdots & \vdots & \ddots & \vdots \\
0 & 0 & \hdots &\ A\ \\
I_n & I_n & \cdots & I_n
\end{pmatrix}\end{array}.
\]}

 We   identify an element of $ \Ker_{\ZZ}(A^{(r)})$ with  an $r\times n $ matrix:   each row of this matrix corresponds to an element of
$ \Ker_{\ZZ}(A)$ and the sum of its rows is zero. The   {\em type} of an element of $ \Ker_{\ZZ}(A^{(r)})$ is the number of  nonzero rows of this matrix.
The {\em Markov complexity}, $m(A)$, is the largest type of any vector in the universal Markov basis of $A^{(r)}$ as $r$ varies. The universal Markov basis
is the union of all minimal Markov bases, \cite{HS}.

Markov complexity measures the complexity of Markov bases for   hierarchical models in Algebraic Statistics. The finiteness of Markov
complexity was first observed by Aoki and Takemura, in \cite{AT},
  and the theory of Markov complexity was developed by Santos and Sturmfels in \cite{SS}.  Recent advances in understanding Markov complexity \cite{KT} and the development of the bouquets algebra \cite{PTV} permit us to answer the problem
of the maximal Markov complexity for any $m\times n$ integer matrix of rank $d$.

We will show that the maximal Markov complexity of  $m\times n$ integer matrices of  rank $d$ follows the following patern

\[
 \begin{array}{ccccccc}
0 & 2 & 3  & \infty & \infty & \cdots & \infty \\
 & 0 & 2  & \infty & \infty & \cdots & \infty \\
 &  & 0  & 2 & \infty & \cdots & \infty \\
 &  &   & 0 & 2 & \cdots & \infty \\
\end{array}
\]

where the columns correspond to $n\geq 1$, the rows to the rank $1\leq d\leq n$ and $\infty $ means that matrices of the corresponding dimensions  may have arbitrary large Markov complexity.
A specific matrix has always finite Markov complexity since the toric ideals of Lawrence liftings are
 positively graded, see \cite{HaraThomaVladoiu15}
and therefore the Markov complexity is smaller than or equal to the Graver complexity, which is always finite, see \cite{SS}.

Remark that if $d=n$ then the columns of $A$ are linearly independent and the kernel of $A$ and $A^{(r)}$, for every $r\ge 2$ are zero, therefore the Markov complexity of those matrices is zero.
In the case that $d=n-1$ the kernel of $A$ is generated by one vector
${\bf u}$ and the kernel of $A^{(r)}$ is generated by matrices of type 2, making the Markov complexity equal to 2.

The case of $1\times 3$ matrices was solved in \cite{HaraThomaVladoiu14}, where it was shown that the Markov complexity of an $1\times 3$ matrix is equal to
 two if the toric ideal $I_{A}$ is complete intersection and equal to three otherwise.
 Thus the maximal Markov complexity is three for $1\times 3$ matrices.
 Recently, in \cite{KT}, the case of $1\times n$, $n\geq 4$, was solved and it was proved that such $1\times n$ matrices may have arbitrarily large Markov complexity. In this article one main result is that $2\times 4$ matrices have arbitrarily large Markov complexity, see Theorem 5.1.
Then, \cite[Theorem 2.4]{KT}, which states that elimination behaves well with Markov complexity, provides a way to move horizontally from $(d,n)$-matrices that have arbitrarily large Markov complexity to prove that $(d,n+k)$-matrices have arbitrarily large Markov complexity. The bouquet's results of Section 3 move you diagonally from $(d,n)$-matrices that have arbitrarily large Markov complexity to prove that
$(d+k,n+k)$-matrices have arbitrarily large Markov complexity. The four results, \cite[Theorem 4.1]{KT}, Theorem~\ref{24},  \cite[Theorem 2.4]{KT} and Theorem~\ref{Markovcomplexity}, show that if we only bound the matrix dimensions then the Markov and Graver complexities can be arbitrarily large, the content of Theorem~\ref{final}. In contrast, based on recent results on sparse integer programming from \cite{EHKKLO,KLO,KO}, we show in Theorem~\ref{complexity_bound} that the Markov and Graver complexities are bounded in terms of $n$ and the largest absolute value $a$ of any entry of $A$.


\section{Preliminaries}

\label{section:preliminaries}


Let $\Bbbk$ be a field and $A=[\ab_1,\ldots,\ab_n]\in\ZZ^{m\times n}$ an integer matrix whose
column vectors are $\ab_1,\ldots,\ab_n$. We recall that the toric ideal of $A$ is the ideal
$I_A\subseteq \Bbbk[x_1,\ldots, x_n]$, generated by all binomials of the form $x^{\ub}-x^{\wb}$,
where $\ub-\wb\in\Ker_{\ZZ}(A)$. The columns of $A$ impose an $A$-grading
on $\Bbbk[x_1,\ldots, x_n]$ by defining the $A$-degree of $x_i$ to be $\ab_i $.
The toric ideal $I_A$ is $A$-homogeneous. Furthermore, if $\Ker_{\ZZ}(A)\cap\NN^n=\{{\bf 0}\}$ then this grading is  positive.

A \emph{Markov basis} $\MM$ of $A$ is a finite subset of $\Ker_{\ZZ}(A)$ such that whenever ${\bf w}, {\bf u} \in \mathbb{N}^n$ with  ${\bf w}-{\bf u} \in \Ker_{\ZZ}(A)$, there exists a subset $ \{ {\bf v}_i : i = 1, \ldots, p \}$ of $\MM$ that connects ${\bf w}$ to ${\bf u}$, that is $(\wb- {{\sum^l_{i=1}}}{\vb_i})\in \NN^n$  for all $1\leq l\leq p$ and   ${\bf w}- {\bf u}=\sum^p_{i=1}{\bf v}_i $. We call a Markov basis $\MM$ of $A$ {\it minimal} if no proper subset of $\MM$ is a Markov basis of $A$.
For a vector ${\bf u}\in  \Ker_{\ZZ}(A)$, we denote by $\bf u^+$, ${\bf u}^-$ the unique vectors in $ \NN^{n}$ such that  $\ub= \ub^+ -\ub^-$. According to a classical result by Diaconis and Sturmfels, if $\MM$  is a minimal Markov basis of $A$, then the set $\{x^{\bf u^+}-x^{\bf u^-}: \ {\bf u}\in \MM\}$ is a minimal generating set of $I_A$ (see \cite[Theorem 3.1]{DS}). The union of all minimal Markov bases of $A$, where we identify  elements that differ by a sign, is called the {\em universal Markov basis} of $A$ and is denoted by ${\MM}(A)$ (see \cite[Definition 3.1]{HS}).

The {\it indispensable subset} of the universal Markov basis $\MM(A)$, which is denoted by $\MS(A)$, is the intersection of all minimal Markov bases of $A$ via the same identification.
The {\it Graver basis} of $A$,  $\MG(A)$,  is the subset of  $\Ker_{\ZZ}(A)$ whose elements have {  no} {\it  proper conformal decomposition}; namely, an element ${\bf u}\in  \Ker_{\ZZ}(A)$ belongs to the Graver basis $\MG(A)$ if whenever ${\bf u}$ can be written in the form ${\bf v}+_{c} {\bf w}$,
where ${\bf v}, {\bf w} \in  \Ker_{\ZZ}(A)$ and  ${\bf u}^+={\bf v}^++ {\bf w}^+$,  ${\bf u}^-={\bf v}^-+ {\bf w}^-$,
we conclude that either ${\bf v}={\bf 0}$ or  ${\bf w}={\bf 0}$, see \cite[Chapter 4]{St}.
The {\it Graver basis} of $A$ is always a finite set and in the case that the ideal $I_A$ is homogeneous
for a positive grading it contains the universal Markov basis of $A$,  see \cite[Theorem 2.3]{HaraThomaVladoiu15}. The
ideal $I_A$ is homogeneous
for a positive grading if and only if
$\Ker_{\ZZ}(A)\cap\NN^n=\{{\bf 0}\}$.
Note that disregarding if the ideal $I_A$ is or is
not homogeneous for a positive grading the higher Lawrence liftings $A^{(r)}$ of $A$ always are,
since the row with all its entries 1 belongs to the row span of the  matrix $A^{(r)}$. Therefore, we have the following inclusions
 \[\MS(A^{(r)})\subseteq\MM(A^{(r)})\subseteq\MG(A^{(r)}) .\]

For the later proofs we will need the algebraic characterization of indispensable elements:

 \begin{Proposition}\cite[Proposition 1.1]{HaraThomaVladoiu14}
\label{indispensable}
 The set of indispensable elements ${\MS}(A)$ of $A$ consists  of all nonzero vectors in $\Ker_{\ZZ}({A})$ with no proper   semiconformal decomposition.
\end{Proposition}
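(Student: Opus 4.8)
The plan is to translate the statement into the geometry of fibers. For $\ub\in\Ker_\ZZ(A)$ write $B_\ub=x^{\ub^+}-x^{\ub^-}$, and for an $A$-degree $\ab$ put $\MF_\ab=\{{\bf m}\in\NN^n:A{\bf m}=\ab\}$; fix a nonzero $\ub\in\Ker_\ZZ(A)$ and set $\ab=A\ub^+=A\ub^-$. I will use the description of a semiconformal decomposition $\ub=\vb+_{sc}\wb$ as one with $\vb,\wb\in\Ker_\ZZ(A)$, $\ub=\vb+\wb$, $\vb^+\le\ub^+$ and $\wb^-\le\ub^-$ (componentwise). The first step is the purely arithmetic equivalence
\[
\ub\ \text{has a proper semiconformal decomposition}\ \iff\ \MF_\ab\supsetneq\{\ub^+,\ub^-\}.
\]
For ``$\Leftarrow$'', given ${\bf m}\in\MF_\ab$ with ${\bf m}\notin\{\ub^+,\ub^-\}$, set $\vb:=\ub^+-{\bf m}$ and $\wb:={\bf m}-\ub^-$: these are nonzero elements of $\Ker_\ZZ(A)$ with $\ub=\vb+\wb$, and $\vb^+\le\ub^+$, $\wb^-\le\ub^-$ follow at once from ${\bf m}\ge{\bf 0}$. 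For ``$\Rightarrow$'', from a proper $\ub=\vb+_{sc}\wb$ set ${\bf m}:=\ub^-+\wb$; then $\wb^-\le\ub^-$ gives ${\bf m}\in\NN^n$, plainly $A{\bf m}=\ab$, and ${\bf m}=\ub^-$ would force $\wb={\bf 0}$ while ${\bf m}=\ub^+$ would force $\vb={\bf 0}$.

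It then remains to prove the key claim: $\ub\in\MS(A)$ if and only if $\MF_\ab=\{\ub^+,\ub^-\}$. The ``if'' direction is quick. Let $\MM$ be any Markov basis of $A$; since $\ub^+-\ub^-=\ub\in\Ker_\ZZ(A)$, a subset of $\MM$ connects $\ub^+$ to $\ub^-$ inside $\NN^n$. Along such a connecting sequence the $A$-degree stays equal to $\ab$, so every point of it lies in $\MF_\ab=\{\ub^+,\ub^-\}$; hence at some step the sequence passes directly from $\ub^+$ to $\ub^-$ (or back), i.e.\ it applies the move $\pm\ub$, so $\pm\ub\in\MM$. As $\MM$ ranges over the minimal Markov bases, $\ub\in\MS(A)$.

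For the converse I would argue contrapositively: assuming $\MF_\ab$ contains some ${\bf m}\notin\{\ub^+,\ub^-\}$, I build a minimal Markov basis that avoids $\pm\ub$. A short check first lets me take such an ${\bf m}$ \emph{good}, meaning $\ub^+-{\bf m}\ne\pm\ub$ and ${\bf m}-\ub^-\ne\pm\ub$: the only forbidden values are ${\bf m}=2\ub^+-\ub^-$ and ${\bf m}=2\ub^--\ub^+$, and either of these lying in $\NN^n$ forces $\ub^-={\bf 0}$ (resp.\ $\ub^+={\bf 0}$), hence $\ab={\bf 0}$ and $\MF_\ab$ infinite, so whenever $\MF_\ab$ has a third point it has a good one. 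Now pick a minimal Markov basis $\MM$; if $\pm\ub\notin\MM$ we are done, and otherwise replace it by
\[
\MM':=\big(\MM\setminus\{\pm\ub\}\big)\cup\{\pm(\ub^+-{\bf m}),\ \pm({\bf m}-\ub^-)\}\subseteq\Ker_\ZZ(A).
\]
The point is that $\MM'$ is again a Markov basis: in any $\MM$-connecting sequence, a step between points ${\bf p}$ and ${\bf p}''$ with ${\bf p}''-{\bf p}=\pm\ub$ can be replaced by the two steps ${\bf p}\to{\bf s}+{\bf m}\to{\bf p}''$, where ${\bf s}:=\min({\bf p},{\bf p}'')$ componentwise; here ${\bf s}+{\bf m}\ge{\bf 0}$ and $\{{\bf p},{\bf p}''\}=\{{\bf s}+\ub^+,{\bf s}+\ub^-\}$, so the two difference vectors are, up to sign, $\ub^+-{\bf m}$ and ${\bf m}-\ub^-$, both in $\MM'$. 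Since ${\bf m}$ is good, $\pm\ub\notin\MM'$, so shrinking the finite Markov basis $\MM'$ to a minimal one produces a minimal Markov basis missing $\pm\ub$, whence $\ub\notin\MS(A)$.

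I expect the genuinely delicate point to be this rerouting step: one has to verify that the detour ${\bf p}\to{\bf s}+{\bf m}\to{\bf p}''$ never leaves $\NN^n$ --- which is exactly where the choices ${\bf s}=\min({\bf p},{\bf p}'')$ and ${\bf m}\ge{\bf 0}$ are used --- and that the two replacement moves are literally different from $\pm\ub$, forcing the preliminary selection of a good ${\bf m}$ together with the separate treatment of the degenerate cases $\ub^+={\bf 0}$ or $\ub^-={\bf 0}$ and their infinite fibers. Putting the two equivalences together gives $\MS(A)=\{\text{nonzero }\ub\in\Ker_\ZZ(A)\text{ with no proper semiconformal decomposition}\}$, and as a byproduct it re-proves the inclusion $\MS(A)\subseteq\MG(A)$ from the preliminaries, since a conformal decomposition is in particular semiconformal.
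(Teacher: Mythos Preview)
The paper does not prove this proposition; it simply quotes it from \cite[Proposition~1.1]{HaraThomaVladoiu14} and uses it as a black box. So there is no ``paper's own proof'' to compare with.

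Your argument is correct. Your route is to first establish the fiber characterisation of indispensability---$\ub\in\MS(A)$ iff $\MF_\ab=\{\ub^+,\ub^-\}$---and then to link fiber size to the (non)existence of a proper semiconformal decomposition. The first of these is precisely the content of \cite[Corollary~4.10]{CKT}, which the present paper also invokes later (at the start of Section~4), so in effect you have re-derived that corollary en route. Your reformulation of $\ub=\vb+_{sc}\wb$ as ``$\vb^+\le\ub^+$ and $\wb^-\le\ub^-$'' is equivalent to the paper's definition, and the two fiber $\leftrightarrow$ semiconformal implications are checked cleanly.

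The one place worth a second look is exactly where you flag it: the rerouting step. Your observation that $\{\pb,\pb''\}=\{\sb+\ub^+,\sb+\ub^-\}$ with $\sb=\min(\pb,\pb'')$ is correct (check it componentwise using disjointness of $\supp\ub^+$ and $\supp\ub^-$), and the detour through $\sb+\mb\ge{\bf 0}$ is then immediate. The handling of the degenerate case is also fine: if the only available third point were $2\ub^+-\ub^-$ or $2\ub^--\ub^+$, nonnegativity forces $\ub^-={\bf 0}$ or $\ub^+={\bf 0}$, whence $\ab={\bf 0}$, $\MF_{\bf 0}\supseteq\{k\ub:k\ge0\}$ is infinite, and e.g.\ $\mb=3\ub^+$ (resp.\ $3\ub^-$) is a good choice. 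The final ``shrink $\MM'$ to a minimal Markov basis'' step is legitimate since $\MM'$ is finite.
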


We recall from \cite[Definition 3.9]{HS} that for vectors $\ub,\vb,\wb\in\Ker_{\ZZ}(A)$ such that $\ub=\vb+\wb$, the sum is said to be a semiconformal decomposition of $\ub$, written $\ub=\vb+_{sc} \wb$, if $v_i>0$ implies that $w_i\geq 0$, and $w_i<0$ implies that $v_i\leq 0$ for all $1\leq i\leq n$. Here $v_i$ denotes the $i^{th}$ coordinate of the vector $\vb$. As before, the decomposition is called {\it proper} if both $\vb, \wb$ are nonzero.
 We remark that  $\bf 0$ cannot be written as the semiconformal sum of two nonzero vectors  in the case that $ \Ker_{\ZZ}(A)\cap \NN^n=\{\bf 0\}$. Note also, that in a semiconformal sum $\ub=\vb+_{sc}\wb$ the following situations occur coordinate-wise: 1) if $u_i>0$ then $v_i$ can take any value and $w_i\geq 0$, 2) if $u_i<0$ then $v_i\leq 0$ and $w_i$ can take any value, and 3) if $u_i=0$ then $v_i\leq 0$ and $w_i\geq 0$. Resuming these three cases to symbols we have: $(\dots, +,\dots)=(\dots, *,\dots)+_{sc}(\dots, \oplus,\dots)$, $(\dots, -,\dots)=(\dots, \ominus,\dots)+_{sc}(\dots, *,\dots)$, $(\dots, 0,\dots)=(\dots, \ominus,\dots)+_{sc}(\dots, \oplus,\dots)$, where the symbol $\ominus $ means that the corresponding integer is non positive, the symbol $\oplus $ non negative and the symbol $*$ means that it can take any value.

\section{Markov complexity and Bouquet ideals}

\medskip

To any toric ideal $I_A\subseteq \Bbbk[x_1,\ldots, x_n]$, encoded by an integer matrix $A\in\ZZ^{m\times n}$,
one can associate another toric ideal, called the ``bouquet ideal",
denoted by $I_{A_B}$. For details of this construction see \cite[Section 1]{PTV},
and how relate in general the properties of the bouquet toric ideal $I_{A_B}$ to
the starting toric ideal $I_A$, see \cite{PTV, PTV2}. Next, we revisit a few technical
details of the bouquet construction, needed for proofs. We recall that two column
vectors $\ab_i,\ab_j$ are in the same bouquet if there exists a vector $\vb\in\ZZ^m$
such that the matrix product $\vb\cdot A$ is a vector with only two nonzero components,
those appearing on the $i$-th and $j$-th positions. In other words, this is equivalent
to saying that the dot products $\vb\cdot \ab_k=0$ for all $k\neq i,j$ and $\vb\cdot\ab_i\neq 0$, $\vb\cdot\ab_j\neq 0$. Then, if $A$ has $q$ bouquets then, without loss of generality, we may assume that there exist positive integers $i_1, i_2, \ldots, i_q$ such that the first $i_1$ column vectors $\ab_1,\ldots,\ab_{i_1}$ belong to the first bouquet, the next $i_2$ column vectors belong to the second bouquet, and so on the last $i_q$ vectors belong to the $q$-th bouquet, thus in particular $i_1+\cdots+i_q=n$.

Next, we recall from \cite{PTV}, the definition of the \emph{bouquet-index-encoding vector} $\cb_B$,
since the proof of Theorem~\ref{latter} requires the basics of its definition and properties.
If the bouquet $B$ is free then we set $\cb_B\in\ZZ^n$ to be any nonzero vector such that
$\supp(\cb_B)=\{i: \ab_i\in B\}$ and with the property that the first nonzero coordinate is positive.
For a  non-free bouquet $B$ of $A$, consider the Gale transforms of the elements in $B$.
All the elements are nonzero and pairwise linearly dependent, therefore there exists a nonzero
coordinate $j$ in all of them. Let $g_j=\gcd(G({\bf a}_i)_j| \ {\bf a}_i\in B)$ and fix the smallest
integer $i_0$ such that ${\bf a}_{i_0}\in B$. Let ${\bf c}_B$  be the vector in $\ZZ^n$ whose $i$-th coordinate
is $0$ if ${\bf a}_i\notin B$, and is $\varepsilon_{i_0j}G({\bf a}_i)_j/g_j$ if ${\bf a}_i \in B$, where $\varepsilon_{i_0j}$ represents the sign of the integer $G({\bf a}_{i_0})_j$.
 Thus the $\supp({\bf c}_B)=\{i: {\bf a}_i\in B\}$. Note that the choice of $i_0$ implies that the first nonzero coordinate of ${\bf c}_B$ is positive. Since each ${\bf a}_i$ belongs to  exactly one bouquet the supports of the vectors ${\bf c}_{B_i}$ are pairwise disjoint. In addition, $\cup_i\supp(\cb_{B_i})=[n]$.
For each bouquet $B$ we define the vector
 $\ab_B=\sum_{i=1}^n (c_B)_i\ab_i\in\ZZ^m$,
where $(c_B)_i$ is the $i$-th component of the vector $\cb_B$. The matrix $A_B$ whose column vectors are the vectors $\ab_B$
corresponding to the bouquets of $A$ is called the {\em bouquet matrix} of $A$.

Finally, we recall that (see \cite[Theorem 1.9]{PTV}) the matrix $A_B\in\ZZ^{m\times q}$ has its kernel $\Ker_{\ZZ}(A_B)$ isomorphic to $\Ker_{\ZZ}(A)$ via the isomorphism $D:\Ker_{\ZZ}(A_B)\mapsto\Ker_{\ZZ}(A)$ given by
\[
D(u_1,\ldots,u_q)=(c_{11}u_1,\ldots,c_{1i_1}u_1,c_{21}u_2,\ldots,c_{2i_2}u_2,\ldots,c_{q1}u_q,\ldots,c_{qi_q}u_q),
\]
where the integers $c_{11},c_{21},\ldots,c_{q1}$ are the first nonzero coordinates of the vectors $\cb_{B_1}$, $\cb_{B_2}, \ldots, \cb_{B_q}$, and thus, all positive. Note that the vector in the image has $n$ coordinates, and that $D$ can be naturally extended to a linear map $D':\QQ^q\mapsto\QQ^n$. The inverse of the map $D$ we are going to denote it by $T$, and $T:\Ker_{\ZZ}(A)\mapsto\Ker_{\ZZ}(A_B)$ is defined
\[
T(v_{11},\ldots,v_{1i_1},v_{21},\ldots,v_{2i_2},\ldots,v_{q1},\ldots,v_{qi_q})=(v_{11}/c_{11},\ldots,v_{i1}/c_{i1},\ldots,v_{q1}/c_{q1}),
\]
and admits, as $D$, a natural extension to a linear map $T':\QQ^n\mapsto\QQ^q$. Here, we used in the definition of $D$ and $T$, the assumption that the first $i_1$ column vectors of $A$ belong to the first bouquet, and so forth.

The next theorem generalizes \cite[Theorem 1.11]{PTV} for Markov bases.

\begin{Theorem}
 \label{Markov}
If $\{{\bf u}_1, \ldots, {\bf u}_t\}$ is a minimal Markov basis for $A$ then $\{T({\bf u}_1), \ldots, T({\bf u}_t)\}$ is a Markov basis for $A_B$, not necessarily minimal.
\end{Theorem}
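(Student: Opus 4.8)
The plan is to transport the Markov-basis connectivity property from $A$ to $A_B$ through the isomorphism $T$ (with inverse $D$). The key structural fact I would exploit is that $D$ and its $\QQ$-extension $D'$ are monomial-type maps: each coordinate block of $D'(u_1,\dots,u_q)$ is a scalar multiple $(c_B)_i u_j$ of a single $u_j$, and within a fixed bouquet all the scalars $(c_B)_i$ have the \emph{same sign} as $c_{j1}$ (by the construction of $\cb_B$, whose entries are $\varepsilon_{i_0j}G(\ab_i)_j/g_j$ for a fixed sign $\varepsilon_{i_0j}$, or for a free bouquet any fixed nonzero vector with positive leading entry — but I should check the mixed-sign case carefully, see below). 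Consequently $D'$ and $T'$ preserve signs coordinate-block-wise, and in particular $D'$ maps $\NN^q$ into $\NN^n$ and $T'$ maps $\NN^n$ into $\QQ_{\geq 0}^q$ — this is the compatibility I need between the two nonnegative orthants.

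First I would verify that $\{T(\ub_1),\dots,T(\ub_t)\}\subseteq\Ker_\ZZ(A_B)$: this is immediate since $T$ is the restriction of the $\ZZ$-isomorphism onto $\Ker_\ZZ(A_B)$ guaranteed by \cite[Theorem 1.9]{PTV}, so each $T(\ub_i)$ is a genuine integer kernel vector. Next, given $\wb',\ub'\in\NN^q$ with $\wb'-\ub'\in\Ker_\ZZ(A_B)$, I would set $\wb=D(\wb')$, $\ub=D(\ub')$; by the sign-preservation of $D'$ these lie in $\NN^n$, and $\wb-\ub=D(\wb'-\ub')\in\Ker_\ZZ(A)$. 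Since $\{\ub_1,\dots,\ub_t\}$ is a Markov basis of $A$, there is a sequence $\vb_{j_1},\dots,\vb_{j_p}$ from $\{\pm\ub_i\}$ with $\wb-\ub=\sum_{l=1}^p\vb_{j_l}$ and $\wb-\sum_{l=1}^k\vb_{j_l}\in\NN^n$ for every $k$. Applying $T$ (which is linear on $\QQ^n$ and sends $\Ker_\ZZ(A)$ to $\Ker_\ZZ(A_B)$) yields $\wb'-\ub'=\sum_{l=1}^p T(\vb_{j_l})$ with each partial sum $\wb'-\sum_{l=1}^k T(\vb_{j_l})=T\bigl(\wb-\sum_{l=1}^k\vb_{j_l}\bigr)$. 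The point is that the intermediate vector $\wb-\sum\vb_{j_l}$ lies in $\NN^n$ \emph{and} in $\Ker_\ZZ(A)+\ub$, hence equals $D$ of an integer vector in $\Ker_\ZZ(A_B)+\ub'$; since it is coordinate-wise nonnegative and $T'$ preserves signs block-wise, its $T$-image is coordinate-wise nonnegative, and being integral (it is in $\Ker_\ZZ(A_B)+\ub'\subseteq\ZZ^q$) it lies in $\NN^q$. Finally $T(\pm\ub_i)=\pm T(\ub_i)$ shows the connecting steps come from $\{\pm T(\ub_i)\}$. This establishes that $\{T(\ub_1),\dots,T(\ub_t)\}$ is a Markov basis of $A_B$.

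The main obstacle — and the step I would write out in full — is the sign-preservation claim for $D'$ and $T'$ on the nonnegative orthants, which underlies both "$\wb,\ub\in\NN^n$" and "$T$ of a nonnegative kernel-coset vector is in $\NN^q$." For a non-free bouquet this is the assertion that all entries $(c_B)_i=\varepsilon_{i_0j}G(\ab_i)_j/g_j$ of $\cb_B$ share the sign of the first one $c_{j1}>0$; but $G(\ab_i)_j$ need not have constant sign across $\ab_i\in B$, so strictly speaking $\cb_B$ can have mixed-sign entries and $D'$ does \emph{not} literally preserve the orthant coordinate-by-coordinate. The correct formulation is subtler: a single coordinate $u_j$ of $\wb'$ controls an entire block of coordinates of $\wb$, and what one actually uses is that $\wb\in\NN^n$ forces $u_j=(w')_j$ to have a determined sign (the sign making $(c_B)_i u_j\geq 0$ for all $i$ in the bouquet, which is consistent precisely because $u_j$ is a scalar), so that the \emph{integer} vector $\wb'\in\ZZ^q$ obtained by inverting is forced into $\NN^q$. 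Thus the argument is really: $D$ restricts to a bijection between $\{\text{nonneg.\ integer points in the image of }D'\}$ and $\NN^q\cap\Ker$-coset structure, and I would isolate this as a short lemma ("$D$ and $T$ are order-isomorphisms with respect to the natural partial orders coming from $\NN^n$ and $\NN^q$ on the relevant cosets") before running the connectivity transfer above. Everything else — linearity, kernel membership, compatibility with sign changes $\ub_i\mapsto-\ub_i$ — is routine and follows directly from \cite[Theorem 1.9]{PTV}.
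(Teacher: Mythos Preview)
Your overall strategy---transport connectivity from $A$ to $A_B$ through the pair $(D',T')$---is exactly the paper's, and you correctly identify the one real obstacle: $\cb_B$ may have mixed-sign entries (Example~\ref{no_minimal} already shows $\cb_{B_1}=(3,-2,0,0)$), so $D'$ does \emph{not} carry $\NN^q$ into $\NN^n$. The problem is that your proposed fix does not close this gap. You set $\wb=D'(\wb')$ and $\ub=D'(\ub')$ and then need $\wb,\ub\in\NN^n$ in order to invoke the Markov basis of $A$; for a mixed bouquet with, say, $c_{j1}>0$ and some $(c_{B_j})_i<0$, the vector $D'(\wb')$ has a strictly negative entry whenever $w'_j>0$, so the very first step of your connectivity argument is unavailable. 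Your ``order-isomorphism'' lemma only goes in the reverse direction: it says (correctly) that $T'(\NN^n)\subseteq\QQ_{\ge 0}^q$ because $T'$ reads off the $i1$-th coordinate and $c_{i1}>0$, but it cannot manufacture a nonnegative pair in $\NN^n$ to which the Markov property of $A$ applies.

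The paper's proof sidesteps this with a short trick you did not find: instead of $D'(\wb')$ and $D'(\ub')$, it connects the pair $D'(\wb')^{+}+D'(\ub')^{-}$ and $D'(\wb')^{-}+D'(\ub')^{+}$, which lie in $\NN^n$ by construction and whose difference is still $D(\wb'-\ub')\in\Ker_\ZZ(A)$. After running the $A$-Markov basis on this pair and applying $T'$, the only additional observation needed is that $T'\bigl(D'(\wb')^{+}\bigr)=\wb'$ and $T'\bigl(D'(\ub')^{-}\bigr)={\bf 0}$: both follow immediately from the fact that the $i1$-th coordinate of $D'(\wb')$ is $c_{i1}w'_i\ge 0$ (since $c_{i1}>0$ and $w'_i\ge 0$), so the positive part leaves it unchanged and the negative part kills it. Once you replace your $D'(\wb'),D'(\ub')$ by these shifted vectors, the rest of your argument (linearity of $T'$, integrality of the intermediate points because they lie in $\wb'+\Ker_\ZZ(A_B)$, and nonnegativity because $T'(\NN^n)\subseteq\QQ_{\ge 0}^q$) goes through verbatim.
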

\begin{proof}
Let $\MM =\{{\bf u}_1, \ldots, {\bf u}_t\}$ be a minimal Markov basis of $A$. Consider ${\bf w}, {\bf u} \in \mathbb{N}^q$ such that ${\bf w}-{\bf u} \in \Ker_{\ZZ}(A_B)$, and we look then at the elements $D'({\bf w})^++D'({\bf u})^-$ and $D'({\bf w})^-+D'({\bf u})^+$ of $\mathbb{N}^n$. Since $$(D'({\bf w})^++D'({\bf u})^-)-(D'({\bf w})^-+D'({\bf u})^+)=D'({\bf w}-{\bf u})=D(\wb-\ub)\in  \Ker_{\ZZ}(A),$$
then there exists a subset $\{{\bf v}_i : i = 1, \ldots,p\}$ of $\MM$ that connects $D'({\bf w})^++D'({\bf u})^-$ to $D'({\bf w})^-+D'({\bf u})^+$. By definition, this means that $(D'({\bf w})^++D'({\bf u})^-- {{\sum^l_{i=1}}}{{\bf v}_i})\in \NN^n$  for all $1\leq l\leq p$, and
$$(D'({\bf w})^++D'({\bf u})^-)- (D'({\bf w})^-+D'({\bf u})^+)(=D({\bf w}-{\bf u}))=\sum^p_{i=1}{\bf v}_i.$$ Applying $T'$ to the previous two equalities we have $T'(D'({\bf w})^++D'({\bf u})^-)- {{\sum^l_{i=1}}}{T({\bf v}_i})\in \NN^q$ and $T'(D({\bf w}-{\bf u}))=T(D({\bf w}-{\bf u}))=\sum^p_{i=1}{T(\bf v}_i )$ (here we used that $T'(\vb_i)=T(\vb_i)$ since $\vb_i\in\Ker_{\ZZ}(A)$).

 Note that $T'(D'({\bf w})^+)=\wb$ since  ${\bf w} \in \mathbb{N}^q$ and $c_{11},\ldots,c_{q1}$ are positive. In addition, $T'(D'({\bf u})^-)={\bf 0}$ since the $i1$-th coordinate of $D'(\ub)^-$ is $0$ because $D'(\ub)$ has all $i1$-th coordinates nonnegative (${\bf u} \in \mathbb{N}^q$ and $c_{i1}>0$ for all $i$). Hence $T'(D'({\bf w})^++D'({\bf u})^-)=\wb+{\bf 0}=\wb$, and therefore we obtain ${\bf w}-{{\sum^l_{i=1}}}{T({\bf v}_i})\in \NN^q$ and ${\bf w}-{\bf u}=\sum^p_{i=1}{T(\bf v}_i )$. The latter relations imply that $\{T({\bf u}_1), \ldots, T({\bf u}_t)\}$ is a Markov basis for $A_B$.\qed

\end{proof}

The following example shows that the minimality of a Markov basis is not preserved in general after applying $T$.

\begin{Example}\label{no_minimal}
{\em  Consider the integer matrix
 \[
 A={\footnotesize\begin{pmatrix}
3 & 3 & 4 & 5 \\
2 & 3 & 0 & 0
\end{pmatrix}}.
\]
Then $A$ has three bouquets, with the first two columns being in the same bouquet while the other two are bouquets by themselves, and $\cb_{B_1}=(3,-2,0,0)$, $\cb_{B_2}=(0,0,1,0)$, $\cb_{B_3}=(0,0,0,1)$ (this implies that $c_{11}=3, c_{21}=1, c_{31}=1$). Hence the bouquet matrix is (see \cite[Example 1.10]{PTV} for detailed computation of the matrix $A_B$)
 \[
A_B={\footnotesize \begin{pmatrix}
3 & 4 & 5 \\
0 & 0 & 0
\end{pmatrix}}.
\]
The map $T$ is given by
$T(u_{11}, u_{12}, u_{21}, u_{31})=(u_{11}/3, u_{21}/1, u_{31}/1)$. Note that the set \{$(0,0,5,-4)$, $(3,-2,-2,1)$, $(3,-2,3,-3)$, $(6,-4,1,-2)$, $(9,-6,-1,-1)$\} represents a minimal Markov basis for $A$, and therefore by Theorem~\ref{Markov} the vectors  $T(0,0,5,-4)=(0,5,-4)$, $T(3,-2,-2,1)=(1,-2,1)$, $T(3,-2,3,-3)=(1,3,-3)$, $T(6,-4,1,-2)=(2,1,-2)$, $T(9,-6,-1,-1)=(3,-1,-1)$ form a Markov basis for $A_B$, but not minimal. Indeed, one can easily check that a minimal Markov basis of $A_B$ consists only of the vectors $(1,-2,1)$, $(2,1,-2)$, $(3,-1,-1)$.}
\end{Example}

\begin{Corollary}
 \label{ind}
 Suppose that $\Ker_{\ZZ}(A_B)\cap\NN^n=\{{\bf 0}\}$ then if ${\bf u}$ is an indispensable element of $A_B$ then $D({\bf u})$ is an indispensable element of $A$.
\end{Corollary}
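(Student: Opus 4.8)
The plan is to translate indispensability into the language of Proposition~\ref{indispensable} and then to move a hypothetical obstruction from the $A$--side to the $A_B$--side through the isomorphism $T=D^{-1}$. I would argue by contraposition: assume $D(\ub)$ is \emph{not} indispensable for $A$ and produce a proper semiconformal decomposition of $\ub$ in $\Ker_{\ZZ}(A_B)$, contradicting the hypothesis that $\ub$ is indispensable for $A_B$. Since $D$ is an isomorphism onto $\Ker_{\ZZ}(A)$ we have $D(\ub)\neq{\bf 0}$, so by Proposition~\ref{indispensable} applied to $A$ it is enough to exclude a \emph{proper} semiconformal decomposition of $D(\ub)$.

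So suppose $D(\ub)=\vb+_{sc}\wb$ with $\vb,\wb\in\Ker_{\ZZ}(A)$ both nonzero. I would apply $T$ to this relation. Being the inverse of the $\ZZ$--module isomorphism $D$, the map $T$ carries $\Ker_{\ZZ}(A)$ bijectively onto $\Ker_{\ZZ}(A_B)$, so $T(\vb),T(\wb)$ are well-defined nonzero integer vectors in $\Ker_{\ZZ}(A_B)$ and $T(\vb)+T(\wb)=T(D(\ub))=\ub$. Hence $\ub=T(\vb)+T(\wb)$ is a proper decomposition, and the only thing left to check is that it is semiconformal, i.e.\ $\ub=T(\vb)+_{sc}T(\wb)$.

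This verification is the heart of the argument. Writing vectors of $\Ker_{\ZZ}(A)$ blockwise according to the bouquets $B_1,\dots,B_q$, on $\Ker_{\ZZ}(A)$ the map $T$ picks out, for each $k$, the first coordinate $v_{k1}$ of the $k$-th block and divides it by the \emph{positive} integer $c_{k1}$ (the first, hence positive, nonzero entry of $\cb_{B_k}$). Thus $(T(\vb))_k=v_{k1}/c_{k1}$ has the same sign as $v_{k1}$; likewise $(T(\wb))_k$ has the same sign as $w_{k1}$ and $u_k$ has the same sign as the $k1$-st coordinate of $D(\ub)$. Consequently, for each block index $k$, the two semiconformal requirements on $(T(\vb),T(\wb))$, namely ``$(T(\vb))_k>0$ implies $(T(\wb))_k\geq 0$'' and ``$(T(\wb))_k<0$ implies $(T(\vb))_k\leq 0$'', are exactly the semiconformal requirements on $(\vb,\wb)$ at the single coordinate $k1$, and these hold because $D(\ub)=\vb+_{sc}\wb$. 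Running $k$ from $1$ to $q$ yields $\ub=T(\vb)+_{sc}T(\wb)$, the contradiction we wanted.

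I expect the one genuine subtlety to be precisely this passage between the $n$-coordinate picture and the $q$-coordinate picture: because the vectors $\cb_{B_k}$ may have entries of mixed sign, $D$ does not respect sign patterns inside a bouquet block, so one has to observe that $T$ consults only the first coordinate of each block and that $c_{k1}>0$ --- this is what makes semiconformality transport cleanly. Finally, the hypothesis $\Ker_{\ZZ}(A_B)\cap\NN^q=\{{\bf 0}\}$ keeps us in the positively graded setting: the same positivity of the $c_{k1}$ shows it also forces $\Ker_{\ZZ}(A)\cap\NN^n=\{{\bf 0}\}$, so $I_A$ too is homogeneous for a positive grading and the semiconformal characterization of indispensable elements is available for both $A$ and $A_B$ (in particular ${\bf 0}$ admits no proper semiconformal decomposition on either side).
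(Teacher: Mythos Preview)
Your argument is correct, and it takes a genuinely different route from the paper. The paper deduces the corollary from Theorem~\ref{Markov}: starting from an \emph{arbitrary} minimal Markov basis $\{\ub_1,\dots,\ub_t\}$ of $A$, it pushes it forward via $T$ to a Markov basis of $A_B$, extracts a minimal one by the homogeneous Nakayama lemma (this is where the positivity hypothesis enters), observes that $\ub$ must equal some $T(\ub_i)$, and concludes $D(\ub)=\ub_i$ lies in the arbitrary minimal Markov basis we started with. You instead work directly with the semiconformal characterization of Proposition~\ref{indispensable}: you pull a hypothetical proper semiconformal decomposition of $D(\ub)$ back through $T$ and check that semiconformality survives because $T$ reads only the first coordinate of each bouquet block and the scalars $c_{k1}$ are positive. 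Your approach is more elementary in that it bypasses Theorem~\ref{Markov} and Nakayama entirely and isolates the precise combinatorial reason (positivity of the leading bouquet coefficients) why indispensability transports; the paper's approach has the virtue of making the result an immediate dividend of the Markov-basis transport theorem it has just established.
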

\begin{proof}
Let $\{{\bf u}_1, \ldots, {\bf u}_t\}$ be a minimal Markov basis for $A$. 
Applying Theorem~\ref{Markov} we obtain that \{$T({\bf u}_1),\ldots,T({\bf u}_t)\}$ is a Markov basis for $A_B$. 
By the homogeneous Nakayama Lemma a subset of it is a minimal Markov basis for $A_B$. 
Since ${\bf u}$ is an indispensable element of $A_B$ then there exists an $i$ such that $T({\bf u}_i)={\bf u}$. 
Then applying $D$ we have $D({\bf u})=D(T({\bf u}_i))=\ub_i$, which means that  $D({\bf u})$ 
is an indispensable element of $A$.\qed

\end{proof}

If $A_B$ is the matrix obtained by the bouquet construction from $A$, then one can easily see that when passing to the $r$-th Lawrence liftings of these matrices, the map $D_{(r)}:\Ker_{\ZZ}(A_B^{(r)})\mapsto \Ker_{\ZZ}(A^{(r)})$ defined by
\[D_{(r)} {\footnotesize\begin{pmatrix}
{\bf u}_1\\
{\bf u}_2 \\
\vdots   \\
{\bf u}_r
\end{pmatrix}} = {\footnotesize\begin{pmatrix}
D({\bf u}_1)\\
D({\bf u}_2) \\
\vdots   \\
D({\bf u}_r)
\end{pmatrix},}
\]
is well defined since $D$ is linear (${\bf u}_1+\cdots +{\bf u}_r={\bf 0}$ implies that $D({\bf u}_1)+\cdots +D({\bf u}_r)={\bf 0}$). Similarly one can define for any $r$ the map $T_{(r)}:\Ker_{\ZZ}(A^{(r)})\mapsto \Ker_{\ZZ}(A_B^{(r)})$, starting from $T$. On the other hand, we can also discuss about the bouquet matrix of the $r$-th Lawrence lifting of $A$, that is $(A^{(r)})_B$, and wonder how it relates to the one of $A_B^{(r)}$.

\begin{Theorem}
 \label{latter}
Let $A_B$ be the bouquet matrix of $A$ and $r\geq 3$ an integer. Then, $$\Ker_{\ZZ}({(A^{(r)})_B})=\Ker_{\ZZ}({A_B^{(r)}}),$$ and in particular $I_{(A^{(r)})_B}=I_{({A_B^{(r)}})}$.
\end{Theorem}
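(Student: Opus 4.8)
The plan is to compute both kernels explicitly as subsets of $\ZZ^{rn}$ and show they coincide; since a toric ideal is determined by the lattice of its kernel, the equality $I_{(A^{(r)})_B}=I_{A_B^{(r)}}$ follows immediately from the equality of kernels. First I would pin down the bouquet structure of $A^{(r)}$. The key observation is that the columns of $A^{(r)}$ come in $n$ natural ``blocks'', where the $j$-th block consists of the $r$ columns corresponding to the $j$-th copy of each variable in the $r$ Lawrence copies; each such column has $A$'s column $\ab_j$ sitting in one of the $r$ diagonal slots and a single $1$ in the bottom $I_n$ part at position $j$. I claim that for $r\geq 3$ the bouquets of $A^{(r)}$ are obtained from the bouquets of $A$ by ``expanding'' each bouquet $B$ of $A$ into a single bouquet of $A^{(r)}$ consisting of all the Lawrence copies of the columns in $B$ — that is, if $\ab_i,\ab_j$ are in the same bouquet of $A$ witnessed by $\vb$, then one checks the columns of $A^{(r)}$ coming from $\ab_i$ and $\ab_j$ (in any of the $r$ copies) lie in a common bouquet of $A^{(r)}$, using certificates built from $\vb$ together with difference vectors in the bottom $I_n$ rows; and conversely, using the bottom identity rows, no two columns from different $A$-bouquets can be linked. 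The hypothesis $r\geq 3$ is what prevents the degenerate collapsing that can occur for $r=2$ (where the classical Lawrence lifting has every column in its own bouquet, or other special behavior), so I expect this combinatorial bouquet-identification step to be the main obstacle and the place where $r\geq 3$ is genuinely used.

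Next I would nail down the bouquet-index-encoding vectors. For a bouquet $B$ of $A$ with vector $\cb_B\in\ZZ^n$, I expect the corresponding bouquet $\widetilde B$ of $A^{(r)}$ to have encoding vector obtained by placing $\cb_B$'s entries appropriately into each of the $r$ Lawrence blocks; concretely, because the $D$ and $T$ maps for $A_B^{(r)}$ are defined block-row-wise via $D_{(r)}$ and $T_{(r)}$ from $D$ and $T$, one should verify that $\Ker_{\ZZ}(A_B^{(r)})$, viewed inside $\ZZ^{rn}$ via $D_{(r)}$, equals exactly the set of $r\times n$ integer matrices whose rows lie in $\Ker_{\ZZ}(A)$ and sum to zero — which is by definition $\Ker_{\ZZ}(A^{(r)})$ reparametrized. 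The slightly delicate point is matching the normalization conventions (first nonzero coordinate positive, the $\gcd$ of Gale transform entries, the choice of the minimal index $i_0$) between the two constructions; here I would argue that both vectors have the same support $\{$ all Lawrence copies of indices in $B\}$ and are scalar multiples, and then check the sign/gcd normalization forces them to be literally equal, or at least to define the same lattice after one applies $D_{(r)}$.

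Finally, with the bouquet structures and encoding vectors matched up, I would assemble the two isomorphisms. On one side, $\Ker_{\ZZ}(A_B^{(r)})\xrightarrow{\ D_{(r)}\ }\Ker_{\ZZ}(A^{(r)})$ identifies $\Ker_{\ZZ}(A_B^{(r)})$ with its image, a sublattice of $\ZZ^{rn}$. On the other side, $\Ker_{\ZZ}((A^{(r)})_B)\xrightarrow{\ D\ }\Ker_{\ZZ}(A^{(r)})$ (this $D$ being the bouquet isomorphism for the matrix $A^{(r)}$) does the same. The content of the theorem is that these two sublattices of $\ZZ^{rn}$ are \emph{the same} sublattice — not merely isomorphic — so I must check that the composite $D^{-1}\circ D_{(r)}$ is the identity on $\ZZ$-kernels, which reduces to the coordinatewise identity $(c_{\widetilde B})_k = (c_B^{(r)})_k$ for the matching bouquets, i.e. exactly the encoding-vector computation from the previous paragraph. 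Once the kernels agree as lattices in $\ZZ^{rn}$, the two toric ideals are generated by the same binomials $x^{\ub^+}-x^{\ub^-}$, so $I_{(A^{(r)})_B}=I_{A_B^{(r)}}$, completing the proof.
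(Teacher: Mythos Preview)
Your identification of the bouquet structure of $A^{(r)}$ is backwards, and this derails the whole argument. You claim that each bouquet $B$ of $A$ expands into a \emph{single} bouquet of $A^{(r)}$ containing all $r$ Lawrence copies of the columns in $B$. In fact the opposite holds: for each non-free bouquet $B$ of $A$, the $r$ copies $B(1),\dots,B(r)$ are $r$ \emph{distinct} bouquets of $A^{(r)}$. Your proposed ``certificates built from $\vb$ together with difference vectors in the bottom $I_n$ rows'' cannot link $\ab_j(s)$ and $\ab_j(p)$ for $s\neq p$: the bottom $I_n$ block contributes the same value to every copy $\ab_j(1),\dots,\ab_j(r)$, so any vector $\wb$ that isolates two of them would force a row-space witness $\vb\cdot A$ supported in a single coordinate $j$, which is possible only when $\ab_j$ is free. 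The paper establishes exactly this non-merging (its ``second case''), and \emph{this} is where $r\geq 3$ is used: one needs a third row slot in which to place $-\ub$ when building the kernel element that yields the contradiction.

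With the wrong bouquet count the dimensions no longer match: your picture gives $(A^{(r)})_B$ only $q$ columns, so its kernel sits in $\ZZ^q$, whereas $\Ker_{\ZZ}(A_B^{(r)})\subset\ZZ^{rq}$; the statement asserts literal equality of these two lattices, not merely an isomorphism after pushing both into $\ZZ^{rn}$ via $D$-maps. Once one knows there are $rq$ bouquets $B_i(s)$ with $\cb_{B_i(s)}=(\mathbf 0,\dots,\cb_{B_i},\dots,\mathbf 0)$ (the $\cb_{B_i}$ in slot $s$), the subbouquet matrix of $A^{(r)}$ is the block matrix with $A_B$ on the diagonal and $C=[\cb_{B_1}^T,\dots,\cb_{B_q}^T]$ repeated across the bottom, while $A_B^{(r)}$ has $I_q$ across the bottom. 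Since $C\cdot\wb=D(\wb)$ and $D$ is an isomorphism on $\Ker_{\ZZ}(A_B)$, the condition $\sum_s C\wb_s=\mathbf 0$ is equivalent to $\sum_s\wb_s=\mathbf 0$, and the two kernels coincide in $\ZZ^{rq}$. That is the actual endgame; your third paragraph's comparison of $D^{-1}\circ D_{(r)}$ is aiming at the wrong target.
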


\begin{proof}
Let $\ab_i(s)$ denote the $i+(s-1)n$ column of $A^{(r)}$ for $s=1,\ldots,r$. We prove that each bouquet $B$ of $A$ gives rise to exactly $r$ subbouquets of $A^{(r)}$, and those $r$ subbouquets of $A$ are bouquets if $B$ is non-free.

Indeed, we first see that if $\ab_j,\ab_k$ belong to the same bouquet, then there exists a vector $\vb\in\ZZ^m$
such that $\vb\cdot A$ has the only nonzero components, $\vb\cdot\ab_j=l_j$ and $\vb\cdot\ab_k=l_k$, on positions $j$ and $k$, respectively. Then the vector $\wb\in\ZZ^{rm+n}$, whose $i+(s-1)m$-th component is $v_i$ (the $i$-th component of $\vb$), for all $i=1,\ldots,m$, and all other components are zero
$\wb=({\bf 0},
{\bf 0},
\ldots,
{\vb},
\ldots
{\bf 0}),$
has the property $(*)$ that ${\wb}\cdot\ab_j(s)=\vb\cdot\ab_j=l_j\neq 0$, ${\wb}\cdot\ab_k(s)=\vb\cdot\ab_k=l_k\neq 0$,
${\wb}\cdot\ab_i(s)=\vb\cdot\ab_i=0$ for all $i\neq j,k$, and $\wb\cdot\ab_i(p)=0$ for all $p\neq s$ and all $i=1,\ldots,n$. Thus $\ab_j(s)$ and $\ab_k(s)$ belong to the same bouquet.
Therefore, if $B=\{\ab_{j_i+1},\ldots ,\ab_{j_{i+1}}\}$ is a bouquet of $A$ then $\ab_{j_i+1}(s), \ldots ,\ab_{j_{i+1}}(s)$ belong to the same bouquet of $A^{(r)}$, for any $1\leq s\leq r$.

 It only remains to prove that if $B$ is a non-free bouquet of $A$ then $\{\ab_{j_i+1}(s), \ldots ,\ab_{j_{i+1}}(s)\}$ is a
bouquet of $A^{(r)}$ for any $1\leq s\leq r$. To show this we have to analyze two cases: 1) $a_j\in B$ and $a_k\notin B$ then $a_j(s)$ and $a_k(s)$ are not in the same bouquet for any $s$, and 2) $a_j(s)$ and $a_k(p)$ are not in the same bouquet for any $k$ and any $p\neq s$.

In the first case, since $\ab_j$ belongs to a non-free bouquet and $\ab_j,\ab_k$ do not belong to the same bouquet, then there exists $\ub\in\Ker_{\ZZ}(A)$ such that its $j$-th component $u_j\neq 0$ and its $k$-th component $u_k=0$. Arguing by contradiction, assume that there exists a vector $\zb\in\ZZ^{rm+n}$ such that ${\zb}\cdot A^{(r)}$ has the only nonzero components on positions $j+(s-1)n$ and $k+(s-1)n$. Let $\yb\in\ZZ^{rn}$ be the vector in $\Ker_{\ZZ}(A^{(r)})$, whose $s$-th row vector is $\ub$ constructed above, and all other row vectors are {\bf 0} except one, which is $-\ub$, and placed arbitrarily (it is possible since $r\geq 3$). Then we obtain that ${\zb}\cdot (A^{(r)}\cdot\yb)={\zb}\cdot {\bf 0}=0$, while $({\zb}\cdot A^{(r)})\cdot \yb\neq 0$ since the matrix product equals the product of the $j+(s-1)n$-th component of ${\zb}\cdot A^{(r)}$ and $u_j$, which is nonzero by construction. Therefore we obtain a contradiction, and the first case is proved.

In the second case, since $\ab_j$ belongs to a non-free bouquet then there exists $\ub\in\Ker_{\ZZ}(A)$ such that its $j$-th component $u_j\neq 0$. Arguing by contradiction and assuming that $a_j(s)$ and $a_k(p)$ are in the same bouquet for some $k$ and some $p\neq s$, then there exists a vector $\zb\in\ZZ^{rm+n}$ such that ${\zb}\cdot A^{(r)}$ has the only nonzero components on positions $j+(s-1)n$ and $k+(p-1)n$. Now, we construct $\yb\in\ZZ^{rn}$ to be the vector in $\Ker_{\ZZ}(A^{(r)})$, whose $s$-th row  vector is $\ub$ constructed above, the other non-zero row vector $-\ub$ is not the $p$-th one, and all the remaining row vectors (including the $p$-th) are {\bf 0} (it is possible since $r\geq 3$).
Then, as in the first case we obtain that ${\zb}\cdot (A^{(r)}\cdot\yb)=0$, while $({\zb}\cdot A^{(r)})\cdot \yb\neq 0$, a contradiction.  Therefore, the second case is also proved, and consequently we have proved that if $B$ is a non-free bouquet of $A$ then $B(s)$ is a (non-free) bouquet of $A^{(r)}$ for all $s=1,\ldots,r$.

We denote by $B_1,\ldots,B_q$ the bouquets of $A$,
and by $B_1(1), \ldots, B_q(1)$, $B_1(2), \ldots, B_q(2)$, $\ldots$,  $B_1(r), \ldots, B_q(r)$ the subbouquets (as shown before) of $A^{(r)}$.
Then, as explained at the beginning of this section and using the property $(*)$ written above 
we have $l_j G(\ab_j(s))+l_k G(\ab_k(s))={\bf 0}$ where $\ab_j,\ab_k$ belong 
to the same bouquet and $\vb$ is the vector defined before $(*)$ 
such that $\vb\cdot\ab_j=l_j$ and $\vb\cdot\ab_k=l_k$. 
It follows from the definition of $\cb_B$ that  $l_j \cb_{B_j}+l_k \cb_{B_k}={\bf 0}$ and 
therefore also $l_j\cb_{B_j(s)}+l_k \cb_{B_k(s)}={\bf 0}$. Thus for each $i=1,\ldots, q$ 
the corresponding $\cb_B$-vectors are
$\cb_{B_i(1)}=\footnotesize {(\cb_{B_i}, {\bf 0}, \ldots, {\bf 0})}, \ \ \ldots, \ \ \cb_{B_i(r)}=\footnotesize{({\bf 0}, {\bf 0}, \ldots, \cb_{B_i})}.$
Then, it follows from the definition of the $\ab_B$-vectors that $\ab_{B_i(s)}$ is the transpose of the matrix
$( {\bf 0}, {\bf 0},
\cdots   ,
{\bf a_{B_i}},
\cdots ,
{\bf 0},
\cb_{B_i}^T
),$
for any $i=1,\ldots,q$, where the vector $\ab_{B_i}$ is positioned as the $s$-th row vector for any $s=1,\ldots,r$. This implies that the subbouquet matrix of $A^{(r)}$ is
\[
(A^{(r)})_B={\footnotesize \begin{array}{c} \overbrace{\quad\quad\quad \quad \quad\ \ }^{r-\textrm{times}} \\
 \begin{pmatrix}
\ A_B\ & 0 &  \hdots & 0 \\
0 &\ A_B\ & \hdots & 0 \\
\vdots & \vdots & \ddots & \vdots \\
0 & 0 & \hdots &\ A_B\ \\
C & C & \cdots & C
\end{pmatrix}\end{array},}
\]
where $C$ is a matrix $n\times q$ whose $i$-th column is the transpose of $\cb_{B_i}$, that is $C=[\cb_{B_1}^T, \cb_{B_2}^T,\ldots,\cb_{B_q}^T]\in\ZZ^{n\times q}$. Note now that the subbouquet matrix of the $r$-th Lawrence lifting of $A$ and the $r$-th Lawrence lifting of the bouquet matrix $A_B$ of $A$, that is

\[
 A_B^{(r)}={\footnotesize \begin{array}{c} \overbrace{\quad\quad\quad \quad \quad\ \ }^{r-\textrm{times}} \\
 \begin{pmatrix}
\ A_B\ & 0 &  \hdots & 0 \\
0 &\ A_B\ & \hdots & 0 \\
\vdots & \vdots & \ddots & \vdots \\
0 & 0 & \hdots &\ A_B\ \\
I_q & I_q & \cdots & I_q
\end{pmatrix}\end{array},}
\]
have the same kernels, which completes the proof. Indeed, to prove the latter, let $\wb={\footnotesize\begin{pmatrix}
\wb_1\\
\wb_2 \\
\vdots   \\
\wb_r
\end{pmatrix}}\in\Ker_{\ZZ}\left (({A^{(r)}})_B \right)$, where each $\wb_i\in\ZZ^q$. This is equivalent to $\wb_i\in\Ker_{\ZZ}(A_B)$ for each $i=1,\ldots,r$ and $C\cdot\wb_1+C\cdot\wb_2+\ldots+C\cdot\wb_r=\bf 0$. Note now that since the columns of $C$ are the transpose of the vectors $\cb_{B_1},\ldots,\cb_{B_q}$ then the matrix product $C\cdot \wb_j$ is just $D(\wb_j)$, where $D:\Ker_Z(A_B)\to\Ker_Z(A)$ is the isomorphism defined at the beginning of the section. Therefore $C\cdot\wb_1+C\cdot\wb_2+\ldots+C\cdot\wb_r=\bf 0$ is equivalent to $\wb_1+\ldots+\wb_q=\bf 0$ since $D$ is an isomorphism, and consequently $\wb\in\Ker_{\ZZ}\left (({A^{(r)}})_B \right)$ is equivalent to $\wb\in\Ker_{\ZZ}(A_B^{(r)})$, and the proof is complete.   \qed

\end{proof}
\begin{Remark}
{\em Note that in the above proof the free bouquet of $A$, if it exists, gives rise to $r$ proper subbouquets of $A^{(r)}$ and whose union is a bouquet of $A^{(r)}$.}
\end{Remark}

\begin{Theorem}
 \label{Markovcomplexity}
The Markov complexity of $A$ is greater than or equal to the Markov complexity of its bouquet matrix $A_B$.
\end{Theorem}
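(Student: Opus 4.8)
The plan is to prove that for every $r\ge 2$ the linear isomorphism $D_{(r)}\colon\Ker_{\ZZ}(A_B^{(r)})\to\Ker_{\ZZ}(A^{(r)})$ carries $\MM(A_B^{(r)})$ into $\MM(A^{(r)})$ and preserves types; since $m(A)$ (resp.\ $m(A_B)$) is by definition the largest type occurring in $\MM(A^{(r)})$ (resp.\ $\MM(A_B^{(r)})$) as $r$ varies, this yields $m(A)\ge m(A_B)$ immediately.

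That $D_{(r)}$ preserves types is clear: $D$ is a linear isomorphism $\Ker_{\ZZ}(A_B)\to\Ker_{\ZZ}(A)$, so a row $\wb_s$ of $\wb\in\Ker_{\ZZ}(A_B^{(r)})$ is zero iff the row $D(\wb_s)$ of $D_{(r)}(\wb)$ is zero, and hence $\wb$ and $D_{(r)}(\wb)$ have the same number of nonzero rows. For the inclusion $D_{(r)}\bigl(\MM(A_B^{(r)})\bigr)\subseteq\MM(A^{(r)})$ I would mimic the argument of Corollary~\ref{ind}, replacing indispensability by a bookkeeping of Markov degrees. By Theorem~\ref{latter}, $A_B^{(r)}$ has the same kernel, and hence the same toric ideal and universal Markov basis, as the matrix $(A^{(r)})_B$ constructed there, whose columns correspond to the subbouquets $B_i(s)$ of $A^{(r)}$ with associated vectors $\cb_{B_i(s)}=({\bf 0},\ldots,\cb_{B_i},\ldots,{\bf 0})\in\ZZ^{rn}$ ($\cb_{B_i}$ in the $s$-th block); one checks that $D_{(r)}$ is exactly the expansion isomorphism of this subbouquet decomposition, sending the unit vector of $B_i(s)$ to $\cb_{B_i(s)}$. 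Since $A_B^{(r)}$ is a Lawrence lifting it is positively graded, so $\Ker_{\ZZ}(A_B^{(r)})\cap\NN=\{{\bf 0}\}$ and the homogeneous Nakayama lemma applies to $I_{A^{(r)}}$ and $I_{A_B^{(r)}}$.

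So let $\wb\in\MM(A_B^{(r)})$ belong to a minimal Markov basis of $A_B^{(r)}$, with $A_B^{(r)}$-degree ${\bf d}$. By homogeneous Nakayama, the assertion $D_{(r)}(\wb)\in\MM(A^{(r)})$ is equivalent to saying that the binomial attached to $D_{(r)}(\wb)$ is not a $\Bbbk[x]$-linear combination $\sum_i m_i\bigl(x^{\ub_i^{+}}-x^{\ub_i^{-}}\bigr)$ with the $m_i$ monomials of positive degree and the $\ub_i\in\Ker_{\ZZ}(A^{(r)})$ of $A^{(r)}$-degree strictly below that of $D_{(r)}(\wb)$. Assuming such an expression existed, I would transport it through the Laurent-monomial substitution $y_{(i,s)}\mapsto x^{\cb_{B_i(s)}}$ that underlies $D_{(r)}$ and clear denominators --- exactly the coordinate-by-coordinate bookkeeping carried out in the proof of Theorem~\ref{Markov}, where one works with $D'(\wb)^{+}+D'(\ub)^{-}$ rather than with $D'(\wb^{+})$ --- and so obtain the binomial attached to $\wb$ as a combination of binomials of $I_{A_B^{(r)}}$ of degree strictly below ${\bf d}$, contradicting $\wb\in\MM(A_B^{(r)})$. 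Hence $D_{(r)}(\wb)\in\MM(A^{(r)})$, and the theorem follows.

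The main obstacle is precisely this transport step. Because the vectors $\cb_{B_i}$ acquire negative entries on non-free bouquets, $D_{(r)}$ does not send $\wb^{+}$ to $D_{(r)}(\wb)^{+}$, so one cannot substitute monomial by monomial; one must follow the subbouquet construction coordinate-by-coordinate within each bouquet and control how the positive and negative parts of each vector recombine once denominators are cleared --- the same technical point that already appears, in the easier $T$-direction, in the proof of Theorem~\ref{Markov}. An equivalent route is to describe $\MM$ via connected components of fibers and use the fiber correspondence induced by $D_{(r)}$; either way, the heart of the matter is the compatibility of $D_{(r)}$ with the $A^{(r)}$-grading and with reducibility of binomials.
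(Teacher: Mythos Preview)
Your proposal leaves a genuine gap precisely where you flag ``the main obstacle.'' You aim to prove the inclusion $D_{(r)}\bigl(\MM(A_B^{(r)})\bigr)\subseteq\MM(A^{(r)})$ by transporting a hypothetical reducing expression
\[
x^{D_{(r)}(\wb)^+}-x^{D_{(r)}(\wb)^-}\ =\ \sum_i m_i\bigl(x^{\ub_i^+}-x^{\ub_i^-}\bigr),\qquad \deg m_i>0,
\]
back to a reducing expression for the binomial attached to $\wb$. But the substitution you write, $y_{(i,s)}\mapsto x^{\cb_{B_i(s)}}$, runs from the $A_B^{(r)}$-side to the $A^{(r)}$-side, whereas you need to move information the other way. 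The reverse passage is the linear map $T'_{(r)}$ on exponent vectors, which is not a ring homomorphism, does not send $\NN^{rn}$ into $\NN^{rq}$, and does not commute with taking positive and negative parts once the $\cb_{B_i}$ have mixed signs. The proof of Theorem~\ref{Markov} handles exactly one such correction (the shift by $D'(\ub)^-$ for a single pair $\wb,\ub\in\NN^q$); here you must control an entire $\Bbbk$-linear combination with arbitrary monomial coefficients $m_i$ and arbitrary $\ub_i\in\Ker_{\ZZ}(A^{(r)})$, and neither the paper nor your sketch supplies that.

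The paper bypasses this difficulty by going the other way. Instead of pushing $\MM(A_B^{(r)})$ forward through $D_{(r)}$, it pulls a minimal Markov basis of $A^{(s)}$ back through $T_{(s)}$: the proof of Theorem~\ref{Markov} applies verbatim with $T_{(s)},D_{(s)},T'_{(s)},D'_{(s)}$ in place of $T,D,T',D'$, so $T_{(s)}$ of a minimal Markov basis of $A^{(s)}$ is a (not necessarily minimal) Markov basis of $A_B^{(s)}$, and some subset is a minimal Markov basis of $A_B^{(s)}$ whose elements all have type at most $m(A)$. The paper then invokes \cite[Theorem~3.3]{HaraThomaVladoiu15}, which says that types are preserved across all minimal Markov bases of a Lawrence lifting, to conclude that every element of $\MM(A_B^{(s)})$ has type at most $m(A)$. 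This route uses only the already-established $T$-direction and an external invariance result; no Laurent substitution or degree-transport is needed. Your inclusion $D_{(r)}\bigl(\MM(A_B^{(r)})\bigr)\subseteq\MM(A^{(r)})$ may well be true, but it is stronger than what the theorem requires and would take more work than the direct argument.
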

\begin{proof}
 First note that both maps $T_{(r)}$ and $D_{(r)}$ preserve the type, since $\ub\neq 0$ is equivalent to $D(\ub)\neq 0$. Suppose that $A$ has Markov complexity $m$. Then there exists an element of type $m$ in a minimal Markov basis of $A^{(r)}$ for some $r$ and all elements in the minimal Markov bases of $A^{(s)}$ for any $s$ have type less than or equal to $m$. Then according to Theorem~\ref{Markov} the images of the Markov basis vectors under  $T_{(s)}$ form a Markov basis for $A_B^{(s)}$, not necessarily minimal. So there exists a minimal Markov basis for $A_B^{(s)}$ with types smaller than or equal to $m$ for every $s$. But according to \cite[Theorem 3.3]{HaraThomaVladoiu15} the types are preserved in all minimal Markov bases. Therefore the Markov complexity of $A_B$ is less than or equal to $m$. \qed

\end{proof}

\section{The family of matrices $ A_s $}
\label{}

Let $s\geq 3$ and consider the family of matrices
\[
A_s={\footnotesize \begin{pmatrix}
0 & 1 & s-1  & s \\
1 & 1 & 1 & 1
\end{pmatrix}}.
\]

First we recall that for an arbitrary integer matrix $A$ and a vector $\wb\in\Ker_{\ZZ}(A)$, the fiber corresponding to $\wb$, denoted by $\mathcal{F}_{\wb}$, is the set $\{\tb\in\NN^n: \wb^{+}-\tb\in\Ker_{\ZZ}(A)\}$, which is a finite set iff $I_A$ is positively graded. It was proved in \cite[Corollary 4.10]{CKT} that the vector  $\wb\in\Ker_{\ZZ}(A)$ is indispensable if and only if $\mathcal{F}_{\wb}=\{\wb^{+},\wb^{-}\}$. With this observation we can identify in the next lemma some indispensable elements of $\Ker_{\ZZ}(A_s)$.

\begin{Lemma}\label{1}
The elements $(1,-1,-1,1), (2-s, s-1, -1, 0)$ and $(0,-1, s-1, 2-s)$ of $\Ker_{\ZZ}(A_s)$ are indispensable.
\end{Lemma}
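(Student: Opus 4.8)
The plan is to use the criterion recalled just before the lemma: a vector $\wb\in\Ker_{\ZZ}(A_s)$ is indispensable precisely when its fiber $\mathcal{F}_{\wb}$ contains only the two obvious points $\wb^+$ and $\wb^-$. So for each of the three vectors I would compute $\wb^+$, determine the $A_s$-degree $A_s\wb^+$, and then enumerate all $\tb\in\NN^4$ with $A_s\tb = A_s\wb^+$, showing the only solutions are $\wb^+$ and $\wb^-$. Since $A_s$ has the all-ones vector in its row span, the grading is positive and each fiber is finite, so this enumeration is a bounded search; the second row of $A_s$ forces $t_1+t_2+t_3+t_4$ to equal a fixed small constant (the ``level'' of the fiber), which makes the case analysis completely finite and independent of $s$ in its combinatorial shape.

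Concretely, for $\wb=(1,-1,-1,1)$ we have $\wb^+=(1,0,0,1)$, $\wb^-=(0,1,1,0)$, and $A_s\wb^+=(s,2)^T$; so I must solve $t_2+(s-1)t_3+st_4 = s$ together with $t_1+t_2+t_3+t_4=2$ in nonnegative integers. The level-$2$ constraint leaves only a handful of monomials to test, and one checks directly that $(1,0,0,1)$ and $(0,1,1,0)$ are the only ones hitting first coordinate $s$ (for $s\ge 3$ the value $s$ cannot be reached by $t_2+(s-1)t_3$ alone with the level bound, nor by $2t_3(s-1)$, etc.). For $\wb=(2-s,s-1,-1,0)$ we have $\wb^+=(0,s-1,0,0)$, $\wb^-=(s-2,0,1,0)$, with $A_s$-degree $(s-1,s-1)^T$: here the level is $s-1$, so I solve $t_2+(s-1)t_3+st_4=s-1$ with $t_1+t_2+t_3+t_4=s-1$; the term $st_4$ forces $t_4=0$, then $t_2+(s-1)t_3=s-1=t_1+t_2+t_3$ gives $(s-2)t_3 = t_1+t_3$, so $t_3\le 1$, leaving exactly $t_3=0$ (giving $\wb^+$) and $t_3=1$ (giving $t_1=s-2$, i.e. $\wb^-$). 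The third vector $(0,-1,s-1,2-s)$ is handled symmetrically: $\wb^+=(0,0,s-1,0)$, $\wb^-=(0,1,0,s-2)$, degree $((s-1)^2,s-1)^T$, and the same style of argument (now the $t_1,t_2$ terms are the ``small'' ones and $st_4$ must balance against $(s-1)t_3$ at level $s-1$) pins down the fiber to two points.

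I expect the only mild obstacle to be bookkeeping: making the finitely many sub-cases genuinely exhaustive and uniform in $s$, in particular checking the small-$s$ boundary (e.g. $s=3$) where some coefficients collide, such as $s-1=2$ and $(s-1)^2 = s+1$. A clean way to avoid ad hoc casework is to phrase it as: on a fiber of level $k$, a monomial $x^\tb$ has $A_s$-degree with first coordinate $\le s\cdot k$, and equals a prescribed value only for the tuples I list; since $k\in\{2,s-1,s-1\}$ is explicit in each case, the bound $\sum t_i = k$ plus the single linear equation $t_2+(s-1)t_3+st_4=(\text{const})$ leaves a two-variable Diophantine problem that is trivially solved. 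Once all three fibers are shown to be $\{\wb^+,\wb^-\}$, \cite[Corollary 4.10]{CKT} gives indispensability immediately, completing the proof.
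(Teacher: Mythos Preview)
Your proposal is correct and follows essentially the same argument as the paper: compute each fiber via the two linear constraints coming from the rows of $A_s$, bound the ``large'' coordinates first ($\delta\le 1$ for the first vector, $\delta=0$ and $\gamma\le 1$ for the second, and the symmetric analogue for the third), and read off that only $\wb^+$ and $\wb^-$ survive. One small arithmetic slip: from $t_2+(s-1)t_3=s-1=t_1+t_2+t_3$ you should get $(s-2)t_3=t_1$ (equivalently $(s-1)t_3=t_1+t_3$), not $(s-2)t_3=t_1+t_3$; the conclusion $t_3\le 1$ and the remainder of the argument are unaffected.
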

\begin{proof} Let $\ub=(1,-1,-1,1)$ and $\tb=(\alpha, \beta, \gamma, \delta)$ an arbitrary vector of $\mathcal{F}_{\ub}$. Since $\tb\in\mathcal{F}_{\ub}$ then $A_s\ub^+=A_s\tb$, which implies the vectorial equality
\[
{\footnotesize \begin{bmatrix}
s \\
2 \end{bmatrix}} = \alpha{\footnotesize \begin{bmatrix}
0 \\
1 \end{bmatrix}} + \beta {\footnotesize \begin{bmatrix}
1 \\
1 \end{bmatrix}}+ \gamma {\footnotesize \begin{bmatrix}
s-1 \\
1 \end{bmatrix}} + \delta {\footnotesize \begin{bmatrix}
s \\
1 \end{bmatrix}}.
\]

Note that $\delta \leq 1$. If $\delta=1$ then we necessarily have $\alpha =1$ and $\beta =\gamma =0$ implying that $\tb=\ub^+$. Otherwise $\delta =0$, and then we obtain $\alpha =0$ and $\beta =\gamma =1$, hence $\tb=\ub^-$. We conclude that $\mathcal{F}_{\ub}=\{\ub^+,\ub^-\}$, and by the previous remark we get that $(1, -1, -1, 1)$ is indispensable.

Similarly, if $\ub= (2-s, s-1, -1, 0)$ then we compute $\mathcal{F}_{\ub}$. Let $\tb=(\alpha, \beta, \gamma, \delta)\in \mathcal{F}_{\ub}$, and by the definition of the fiber, as before, we get the following vectorial equality
\[
{\footnotesize \begin{bmatrix}
s-1 \\
s-1 \end{bmatrix}} = \alpha{\footnotesize \begin{bmatrix}
0 \\
1 \end{bmatrix}} + \beta {\footnotesize \begin{bmatrix}
1 \\
1 \end{bmatrix}}+ \gamma {\footnotesize \begin{bmatrix}
s-1 \\
1 \end{bmatrix}} + \delta {\footnotesize \begin{bmatrix}
s \\
1 \end{bmatrix}}.
\]

Note that $\delta =0$ and $\gamma \leq 1$. If $\gamma =1$ then necessary $\alpha =s-2 $ and $\beta =0$, which implies that $\tb=\ub^-$. Otherwise $\gamma =0$, and then necessarily $\alpha =0$ and $\beta =s-1$, which means that $\tb=\ub^+$. Therefore $\mathcal{F}_{\ub}=\{\ub^+,\ub^-\}$, and again by previous remark we conclude that $(2-s, s-1, -1, 0)$ is indispensable. Finally, the element $(0,-1, s-1, 2-s)$ is indispensable with an identical proof as in the previous case. \qed

\end{proof}

\section{Markov complexity of $2\times 4$ matrices}
\label{Section: MainResult}

In this section, we prove one of the main results of this paper regarding the unboundedness of the Markov complexity
of  $2\times 4$ matrices.  To prove this, we need to find a family of $2\times 4$ matrices that depend on a parameter $r$, such that the Markov complexity of $A_r$, the $r^{th}$ member of the family, is at least $r$. The latter reduces to finding an element of type $r$ that belongs to $\MM(A_r^{(r)})$.

\begin{Theorem} \label{24}
\label{large} The $2\times 4$ matrices may have arbitrarily large Markov complexity.
\end{Theorem}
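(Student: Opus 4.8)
The plan is to prove the sharper assertion that $m(A_s)\ge s$ for every $s\ge 3$, where $A_s$ is the family of matrices from the previous section; since $s$ is unbounded this gives the theorem. As $\MS(A_s^{(s)})\subseteq\MM(A_s^{(s)})$, it suffices to exhibit, for each $s$, an indispensable element of $\Ker_{\ZZ}(A_s^{(s)})$ of type $s$. The candidate I would take is the vector $\wb\in\Ker_{\ZZ}(A_s^{(s)})$ which, read as an $s\times 4$ integer matrix, has first row $u_2=(2-s,\,s-1,\,-1,\,0)$, second row $u_3=(0,\,-1,\,s-1,\,2-s)$, and $s-2$ further rows all equal to $u_1=(1,-1,-1,1)$. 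Each row lies in $\Ker_{\ZZ}(A_s)$; the rows sum to ${\bf 0}$ because $u_2+u_3=(2-s)u_1$, so $u_2+u_3+(s-2)u_1={\bf 0}$; and every row is nonzero, so $\wb$ has type exactly $s$. Moreover $u_1,u_2,u_3$ are indispensable for $A_s$ by Lemma~\ref{1}.

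To show $\wb$ is indispensable I would invoke the fiber criterion \cite[Corollary 4.10]{CKT}: since $A_s^{(s)}$ is positively graded, it suffices to verify $\mathcal{F}_{\wb}=\{\wb^{+},\wb^{-}\}$. Given $\tb\in\mathcal{F}_{\wb}$ with rows $\tb^{(1)},\dots,\tb^{(s)}\in\NN^4$, the block rows of $A_s^{(s)}$ force $A_s\tb^{(k)}=A_sR_k^{+}$, where $R_k$ denotes the $k$-th row of $\wb$; hence $\tb^{(k)}$ lies in the $A_s$-fiber of $R_k$, and since $R_k$ is indispensable for $A_s$ we conclude $\tb^{(k)}\in\{R_k^{+},R_k^{-}\}$. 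The bottom block of $s$ copies of $I_4$ then imposes the single balancing equation $\sum_{k}\tb^{(k)}=\sum_{k}R_k^{+}$.

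The argument finishes with an elementary zero-sum computation. Setting $T=\{k:\tb^{(k)}=R_k^{-}\}$, the balancing equation is equivalent to $\sum_{k\in T}R_k={\bf 0}$, because $R_k^{+}-R_k^{-}=R_k$. Now $T$ consists of $\varepsilon_1\in\{0,1\}$ copies of $u_2$, $\varepsilon_2\in\{0,1\}$ copies of $u_3$ and $j\in\{0,\dots,s-2\}$ copies of $u_1$, and comparing the first and fourth coordinates of $\varepsilon_1u_2+\varepsilon_2u_3+ju_1={\bf 0}$ gives $\varepsilon_1(2-s)+j=0=\varepsilon_2(2-s)+j$; since $s\ne 2$ this forces $\varepsilon_1=\varepsilon_2=:\varepsilon$ and then $j=(s-2)\varepsilon$. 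Thus either $T=\emptyset$, so $\tb=\wb^{+}$, or $T=\{1,\dots,s\}$, so $\tb=\wb^{-}$. Hence $\mathcal{F}_{\wb}=\{\wb^{+},\wb^{-}\}$, so $\wb$ is indispensable, belongs to $\MM(A_s^{(s)})$, and has type $s$; therefore $m(A_s)\ge s$, and the theorem follows.

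The only genuinely inventive step is the choice of the type-$s$ vector $\wb$; once it is found, the verification reduces, via Lemma~\ref{1} and the block structure of the Lawrence lifting, to the transparent fact that the multiset of rows $\{u_2,u_3,u_1,\dots,u_1\}$ has no zero-sum sub-multiset other than $\emptyset$ and the whole multiset. I expect the main technical care needed in a full write-up is the reduction from $\mathcal{F}_{\wb}$ to per-row fibers of $A_s$, that is, spelling out that a fiber element of $A_s^{(s)}$ is precisely an $s$-tuple of fiber elements of $A_s$ satisfying the one balancing condition coming from the identity block.
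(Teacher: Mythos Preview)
Your proof is correct. You use the same candidate element as the paper (up to a reordering of rows, which is irrelevant) and the same appeal to Lemma~\ref{1}, but you establish indispensability via the fiber criterion \cite[Corollary~4.10]{CKT} rather than via Proposition~\ref{indispensable}. The paper instead assumes a proper semiconformal decomposition $\ub=\vb+_{sc}\wb$ and rules it out by a two-case sign-pattern analysis, using column-sum constraints to force the last two rows and then the remaining rows of $\vb$ or $\wb$ to vanish.

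Your route is a bit more economical: once you observe that a fiber element of $A_s^{(s)}$ is exactly an $s$-tuple of $A_s$-fiber elements satisfying the balancing condition $\sum_k\tb^{(k)}=\sum_k R_k^{+}$, Lemma~\ref{1} collapses everything to the zero-sum sub-multiset question, which you dispatch in two lines using the first and fourth coordinates. The paper's semiconformal argument is slightly longer because it has to track sign patterns row by row and handle two symmetric cases, but it has the mild advantage of being self-contained within Proposition~\ref{indispensable} without invoking \cite{CKT}. Either way the substance is the same: the indispensability of $u_1,u_2,u_3$ for $A_s$ plus the column-sum constraint leaves no room for a nontrivial decomposition.
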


\begin{proof}
We will show that the type $s$ element
\[
{\bf u} ={\footnotesize \begin{pmatrix}
1 & -1 & -1  & 1 \\
1 & -1 & -1  & 1 \\
\vdots & \vdots & \vdots & \vdots \\
1 & -1 & -1  & 1 \\
0 & -1 & s-1 & 2-s \\
2-s & s- 1 & -1 & 0
\end{pmatrix}}
\]
 belongs to every Markov basis of $A_s^{(s)}$, where $A_s={\footnotesize \begin{pmatrix}
0 & 1 & s-1  & s \\
1 & 1 & 1 & 1
\end{pmatrix}}$, and consequently $m(A_s)\geq s$. In other words, we want to prove that $\ub$ is an indispensable element of $A_s^{(s)}$. Note first that $\ub\in\Ker_{\ZZ}(A_s^{(s)})$ since every row  of $\ub$ is in $ \Ker_{\ZZ}(A_s)$ and the sum of each column is zero.

Let us assume on the contrary that the element ${\bf u}$ is not indispensable. Using Proposition~\ref{indispensable} this implies that ${\bf u}$ admits a proper semiconformal
decomposition ${\bf u}={\bf v}+_{sc} {\bf w}$, where ${\bf u},{\bf v},{\bf w} \in  \Ker_{\ZZ}(A_s^{(s)})$ such that
$$
{\bf v}_{ij}>0 \Rightarrow {\bf w}_{ij} \geq 0 \text{ and }
{\bf w}_{ij}<0 \Rightarrow {\bf v}_{ij} \leq 0 \text{, }
$$
for any $1 \leq i \leq s, 1 \leq j \leq 4$. In terms of signs, for each row of the vector ${\bf u}$ we have the following semiconformal decomposition
\begin{eqnarray*}
(1,-1,-1,1) & = & ( * , \ominus , \ominus , * ) +_{sc} ( \oplus ,* , * , \oplus) \\
(0, -1, s-1, 2-s) & = & (\ominus,\ominus,*,\ominus) +_{sc} (\oplus,*,\oplus,*)\\
(2-s, s-1, -1, 0) & = & (\ominus,*,\ominus,\ominus) +_{sc} (*,\oplus,*,\oplus) \text{.}
\end{eqnarray*}

  Since ${\bf u}={\bf v}+_{sc} {\bf w}$ is a semiconformal decomposition of ${\bf u}$, then the sign pattern of the elements ${\bf v}, {\bf w}$ is:
 \[
{\bf u} ={\footnotesize \begin{pmatrix}
1 & -1 & -1  & 1 \\
1 & -1 & -1  & 1 \\
\vdots & \vdots & \vdots & \vdots \\
1 & -1 & -1  & 1 \\
0 & -1 & s-1 & 2-s \\
2-s & s- 1 & -1 & 0
\end{pmatrix}
= \begin{pmatrix}
* & \ominus & \ominus  & * \\
* & \ominus & \ominus & * \\
\vdots & \vdots & \vdots & \vdots \\
* & \ominus & \ominus  & * \\
\ominus & \ominus & * & \ominus \\
 \ominus &  * & \ominus & \ominus
\end{pmatrix}
+_{sc}
\begin{pmatrix}
\oplus & * & *  & \oplus \\
\oplus & * & *  & \oplus \\
\vdots & \vdots & \vdots & \vdots \\
\oplus & * & *  & \oplus \\
\oplus & * & \oplus & * \\
 * &  \oplus & * & \oplus
\end{pmatrix}}.
\]

According to Lemma~\ref{1} the element $(1,-1,-1,1)$ is indispensable,
which means that it has only two semiconformal decompositions, see Proposition \ref{indispensable}, corresponding to the following two cases.

\medskip

{\bf First case:} the first row of ${\bf v}$ is $ (1, -1, -1, 1)$ and  the first row of ${\bf w}$ is $(0, 0, 0, 0)$, i.e.

\[
{\bf u} ={\footnotesize \begin{pmatrix}
1 & -1 & -1  & 1 \\
1 & -1 & -1  & 1 \\
\vdots & \vdots & \vdots & \vdots \\
1 & -1 & -1  & 1 \\
0 & -1 & s-1 & 2-s \\
2-s & s- 1 & -1 & 0
\end{pmatrix}
= \begin{pmatrix}
1 & -1 & -1  & 1 \\
* & \ominus & \ominus & * \\
\vdots & \vdots & \vdots & \vdots \\
* & \ominus & \ominus  & * \\
\ominus & \ominus & * & \ominus \\
 \ominus & * & \ominus & \ominus
\end{pmatrix}
+_{sc}
\begin{pmatrix}
0 & 0 & 0  & 0 \\
\oplus & * & *  & \oplus \\
\vdots & \vdots & \vdots & \vdots \\
\oplus & * & *  & \oplus \\
\oplus & * & \oplus & * \\
 * &  \oplus & * & \oplus
\end{pmatrix} }.
\]

Considering that the sum of every column of $\vb$ should be zero, we conclude that $v_{s,2}$, the last element of the second column of ${\bf v}$, and $v_{s-1,3}$, the
$s-1$ element of the third column of ${\bf v}$ are both positive, so

\[
{\bf u} ={\footnotesize \begin{pmatrix}
1 & -1 & -1  & 1 \\
1 & -1 & -1  & 1 \\
\vdots & \vdots & \vdots & \vdots \\
1 & -1 & -1  & 1 \\
0 & -1 & s-1 & 2-s \\
2-s & s- 1 & -1 & 0
\end{pmatrix}
= \begin{pmatrix}
1 & -1 & -1  & 1 \\
* & \ominus & \ominus & * \\
\vdots & \vdots & \vdots & \vdots \\
* & \ominus & \ominus  & * \\
\ominus & \ominus & + & \ominus \\
 \ominus & + & \ominus & \ominus
\end{pmatrix}
+_{sc}
\begin{pmatrix}
0 & 0 & 0  & 0 \\
\oplus & * & *  & \oplus \\
\vdots & \vdots & \vdots & \vdots \\
\oplus & * & *  & \oplus \\
\oplus & * & \oplus & * \\
 * &  \oplus & * & \oplus
\end{pmatrix}}.
\]

It follows from Lemma~\ref{1} that the elements $(0,-1, s-1, 2-s)$ and $(2-s, s-1, -1, 0)$ of $\Ker_{\ZZ}(A_s)$ are indispensable, therefore they have each only two semiconformal decompositions. This implies that the last two rows of $\wb$ are zero, since we have just noticed that the last two rows of $\vb$ have each a positive component. Hence

\[
{\bf u} ={\footnotesize \begin{pmatrix}
1 & -1 & -1  & 1 \\
1 & -1 & -1  & 1 \\
\vdots & \vdots & \vdots & \vdots \\
1 & -1 & -1  & 1 \\
0 & -1 & s-1 & 2-s \\
2-s & s-1 & -1 & 0
\end{pmatrix}
= \begin{pmatrix}
1 & -1 & -1  & 1 \\
* & \ominus & \ominus & * \\
\vdots & \vdots & \vdots & \vdots \\
* & \ominus & \ominus  & * \\
0 & -1 & s-1 & 2-s \\
2-s & s- 1 & -1 & 0
\end{pmatrix}
+_{sc}
\begin{pmatrix}
0 & 0 & 0  & 0 \\
\oplus & * & *  & \oplus \\
\vdots & \vdots & \vdots & \vdots \\
\oplus & * & *  & \oplus \\
0 & 0 & 0 & 0 \\
 0 &  0 & 0 & 0
\end{pmatrix} }.
\]

The remaining $s-3$ rows of ${\bf w}$ are either $(1, -1, -1, 1)$ or $(0, 0, 0, 0)$ and taking into account that
the sum of every column should be zero, then we get that all rows of $\wb$
should be  $(0, 0, 0, 0)$ thus  $\wb$ is the zero matrix, a contradiction to our assumption that $\ub=\vb+_{sc}\wb$ is a proper semiconformal decomposition.
\medskip

{\bf Second case:} the first row of ${\bf v}$ is $(0, 0, 0, 0)$ and  the first row of ${\bf w}$ is $(1, -1, -1, 1)$, i.e.

\[
{\bf u} ={\footnotesize \begin{pmatrix}
1 & -1 & -1  & 1 \\
1 & -1 & -1  & 1 \\
\vdots & \vdots & \vdots & \vdots \\
1 & -1 & -1  & 1 \\
0 & -1 & s-1 & 2-s \\
2-s & s- 1 & -1 & 0
\end{pmatrix}
= \begin{pmatrix}
0 & 0 & 0  & 0 \\
* & \ominus & \ominus & * \\
\vdots & \vdots & \vdots & \vdots \\
* & \ominus & \ominus  & * \\
\ominus & \ominus &  * & \ominus \\
 \ominus &  * & \ominus & \ominus
\end{pmatrix}
+_{sc}
\begin{pmatrix}
1 & -1 & -1  & 1 \\
\oplus & * & *  & \oplus \\
\vdots & \vdots & \vdots & \vdots \\
\oplus & * & *  & \oplus \\
\oplus & * & \oplus &  * \\
 * &  \oplus & * & \oplus
\end{pmatrix}}.
\]

Considering that the sum of every column of $\wb$ should be zero, then we conclude that $w_{s,1}$, the last element of the first column of ${\bf w}$, and $w_{s-1,4}$, the $s-1$ element
of the fourth column of ${\bf w}$, are both negative, so

\[
{\bf u} ={\footnotesize \begin{pmatrix}
1 & -1 & -1  & 1 \\
1 & -1 & -1  & 1 \\
\vdots & \vdots & \vdots & \vdots \\
1 & -1 & -1  & 1 \\
0 & -1 & s-1 & 2-s \\
2-s & s- 1 & -1 & 0
\end{pmatrix}
= \begin{pmatrix}
0 & 0 & 0  & 0 \\
* & \ominus & \ominus & * \\
\vdots & \vdots & \vdots & \vdots \\
* & \ominus & \ominus  & * \\
\ominus & \ominus & * & \ominus \\
 \ominus & * & \ominus & \ominus
\end{pmatrix}
+_{sc}
\begin{pmatrix}
1 & -1 & -1  & 1 \\
\oplus & * & *  & \oplus \\
\vdots & \vdots & \vdots & \vdots \\
\oplus & * & *  & \oplus \\
\oplus & * & \oplus & - \\
 - &  \oplus & * & \oplus
\end{pmatrix}}.
\]

Since $(0,-1, s-1, 2-s)$ and $(2-s, s-1, -1, 0)$ are indispensable elements of $\Ker_{\ZZ}(A_s)$, then they have each only two semiconformal decompositions. We obtain that the last two rows of $\vb$ are zero, since we have just noticed that the last two rows of $\wb$ have each a negative component. Hence

\[
{\bf u} ={\footnotesize \begin{pmatrix}
1 & -1 & -1  & 1 \\
1 & -1 & -1  & 1 \\
\vdots & \vdots & \vdots & \vdots \\
1 & -1 & -1  & 1 \\
0 & -1 & s-1 & 2-s \\
2-s & s- 1 & -1 & 0
\end{pmatrix}
= \begin{pmatrix}
0 & 0 & 0  & 0 \\
* & \ominus & \ominus & * \\
\vdots & \vdots & \vdots & \vdots \\
* & \ominus & \ominus  & * \\
0 & 0 & 0 & 0 \\
 0 &  0 & 0 & 0
\end{pmatrix}
+_{sc}
\begin{pmatrix}
1 & -1 & -1  & 1 \\
\oplus & * & *  & \oplus \\
\vdots & \vdots & \vdots & \vdots \\
\oplus & * & *  & \oplus \\
 0 & -1 & s-1 & 2-s \\
2-s & s- 1 & -1 & 0
\end{pmatrix}}.
\]

The remaining $s-3$ rows of ${\bf v}$ are either $(1, -1, -1, 1)$ or $(0, 0, 0, 0)$  and considering that the sum of every column should be zero,
then we get that all rows of ${\bf v}$
should be  $(0, 0, 0, 0)$ thus  ${\bf v}$ is the zero matrix, a contradiction to our assumption that $\ub=\vb+_{sc}\wb$ is a proper semiconformal decomposition.

In conclusion, from the two cases, we obtain that ${\bf u}$ is indispensable, and therefore $m(A_s)\geq s$. \qed

\end{proof}

\section{Markov complexity of $m\times n$ matrices}

\medskip

We can state now the main result of this paper:

\begin{Theorem}
\label{final} Markov complexity of $m\times n$ matrices of rank $d$ may be arbitrarily large, for all $n\geq 4$ and $d\leq n-2$.
\end{Theorem}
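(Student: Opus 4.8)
The plan is to deduce the statement from two ``corner'' cases by propagating unboundedness of Markov complexity with a single move built from the bouquet constructions of Section~3, using repeatedly that the Markov complexity of a matrix is unchanged if one appends zero rows or permutes columns. The first corner is $d=1$, $n\ge 4$, which is \cite[Theorem 4.1]{KT}; the matrices produced there may be taken with all entries nonzero, and then (for $n\ge 3$) every bouquet is a non-free singleton, since a vector $\vb$ with $\vb\cdot\ab_\ell=0$ for two or more indices $\ell$ must vanish. The second corner is $d=2$, $n=4$, which is Theorem~\ref{24}, realized by the family $A_s$ of Section~4; it too has every bouquet a non-free singleton, as $\Ker_\ZZ(A_s)$ contains the fully supported vector $(1,-1,-1,1)$ of Lemma~\ref{1} and any two columns of $A_s$ are linearly independent, so no two of them lie in a common bouquet of a $2\times 4$ matrix.

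The heart of the argument is the following diagonal move. Let $M=[\mathbf m_1,\dots,\mathbf m_{n'}]\in\ZZ^{m'\times n'}$ have rank $d'$, nontrivial kernel, and every bouquet a non-free singleton. Then for each integer $k\ge 0$ there is a matrix $A\in\ZZ^{(m'+k)\times(n'+k)}$ of rank $d'+k$ whose bouquet matrix $A_B$ equals $M$ with $k$ zero rows appended; hence, by Theorem~\ref{Markovcomplexity}, $m(A)\ge m(A_B)=m(M)$. Such an $A$ is obtained by ``blowing up'' the first column of $M$ into one bouquet of size $k+1$:
\[
A=\begin{pmatrix}
\mathbf 0_{m'\times k} & -\mathbf m_1 & \mathbf m_2\ \cdots\ \mathbf m_{n'}\\[2pt]
I_k & \mathbf 1_k & \mathbf 0_{k\times(n'-1)}
\end{pmatrix},\qquad \mathbf 1_k=(1,\dots,1)^{T}.
\]
From the descriptions in Section~3 one verifies: (i) the map $(\xi_1,\dots,\xi_{n'})\mapsto(\xi_1,\dots,\xi_1,-\xi_1,\xi_2,\dots,\xi_{n'})$ is a lattice isomorphism $\Ker_\ZZ(M)\xrightarrow{\sim}\Ker_\ZZ(A)$, so $\rk A=(n'+k)-\dim\Ker_\ZZ(M)=d'+k$; (ii) the first $k+1$ columns of $A$ constitute a single non-free bouquet with index vector $\cb_B=(1,\dots,1,-1,0,\dots,0)$, the remaining columns are non-free singletons, and none of these $n'$ bouquets merge --- here the hypothesis on $M$ enters, since such a merge would exhibit two columns of $M$ in a common bouquet; (iii) consequently $\ab_B=\mathbf m_1$ for the big bouquet and $\ab_B=\mathbf m_j$ for the others, so $A_B$ is $M$ with $k$ zero rows adjoined and $m(A_B)=m(M)$.

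To assemble the proof, fix $n\ge 4$ and $1\le d\le n-2$; the case $d=1$ is a corner, so assume $d\ge 2$ and set $c:=n-d\ge 2$. If $c=2$, apply the diagonal move with $k=d-2$ to $A_s$: the output is a family of rank-$d$ matrices of size $d\times(4+k)=d\times n$ with Markov complexity $\ge m(A_s)\ge s$, hence unbounded. If $c\ge 3$, then $n-d+1\ge 4$, so \cite[Theorem 4.1]{KT} provides a family of $1\times(n-d+1)$ matrices (with all entries nonzero) of unbounded Markov complexity, and the diagonal move with $k=d-1$ turns it into a family of rank-$d$ matrices of size $d\times\bigl((n-d+1)+k\bigr)=d\times n$ of unbounded Markov complexity; this covers every admissible $(d,n)$. (Alternatively, as indicated in the introduction, one may first use the elimination theorem \cite[Theorem 2.4]{KT} to move horizontally along a row and then apply the diagonal move starting from the row $d=2$.) The step I expect to be the main obstacle is (ii): showing that in the displayed $A$ exactly the designated $k+1$ columns form one bouquet, that no two of the $n'$ bouquets coalesce, and that $\cb_B$ is as claimed, so that $A_B$ is honestly $M$ (with zero rows) and not a strict bouquet contraction of it; this is a finite check against the ``same bouquet'' criterion of Section~3 and uses only that every bouquet of $M$ is a non-free singleton. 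Note that the positivity hypothesis of Corollary~\ref{ind} is unnecessary here, since Theorem~\ref{Markovcomplexity} is unconditional; the only other point is the routine bookkeeping that the two source families reach every $(d,n)$ with $n\ge 4$ and $d\le n-2$, the tight case $d=n-2$ (corank $2$) being exactly the one the rank-$2$ corner $A_s$ is built to handle.
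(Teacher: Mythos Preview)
Your proof is correct and follows the same overall strategy as the paper: reduce to the two corner cases (Theorem~\ref{24} for corank $2$, \cite[Theorem 4.1]{KT} for corank $\geq 3$), then use the bouquet inequality of Theorem~\ref{Markovcomplexity} to climb diagonally to arbitrary $(d,n)$. The difference is in how the diagonal move is realized. The paper invokes \cite[Theorem 2.1]{PTV} as a black box to produce a generalized Lawrence matrix $L$ with prescribed bouquet ideal $I_{A_s}$, whereas you write down an explicit one-bouquet blow-up of a single column and verify by hand (via the Gale-transform description) that its bouquet matrix is $M$ with zero rows appended. Your version is more self-contained and makes transparent exactly which hypothesis on $M$ is needed (every bouquet a non-free singleton), at the cost of having to check that the corner families actually satisfy it; the paper's citation of \cite{PTV} avoids that check but hides the construction. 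Your verification that $A_s$ and the $1\times(n-d+1)$ matrices from \cite{KT} have only non-free singleton bouquets is correct, and the explicit matrix $A$ you display is indeed a special case of the generalized Lawrence matrices of \cite{PTV} (compare Example~\ref{Ex1}). The handling of the row count $m$ by appending zero rows is the same in both arguments.
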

\begin{proof} Let $n, d$ that satisfy the hypotheses of the theorem. In particular, we have $n-d\geq 2$. If $n-d=2$ then let
 \[ A_s= \begin{pmatrix}
 0 & 1 & s-1  & s \\
1 & 1 & 1 & 1
\end{pmatrix} \] and if $n-d\geq 3$ let
\[
A_s= \begin{pmatrix}
 1 & s & s^2-s  & s^2-1 & 1 & 1 & \cdots & 1
\end{pmatrix}
\] be an $1\times(n-d+1)$ matrix. Both of them have Markov complexity at least $s$, see Theorem~\ref{24}
for the first one and \cite[Theorem 4.1, Corollary 4.4]{KT}, respectively, for the second one.
Next we apply \cite[Theorem 2.1]{PTV} to construct $L$ a generalized Lawrence matrix $m\times n$
of rank $d$ such that its subbouquet ideal is the toric ideal of $A_s$. The matrix $L$ has rank $d$ since bouquets
preserve the codimension, as noticed in the introduction of Section 2.
 Applying now Theorem~\ref{Markovcomplexity} we obtain that the $m\times n$ matrix $L$ of rank $d$
 has Markov complexity at least $s$. \qed

\end{proof}

The next example shows the construction of $L$ from Theorem~\ref{final}.

\begin{Example} \label{Ex1}
 {\em Suppose that $d=6$ and $n=17$, then $n-d=11$ and we consider the  $1\times (n-d+1)=1\times 12$ matrix
{\small \[ A_s= \left( \begin{array}{cccccccccccc}
 1 & s & s^2-s  & s^2-1 & 1 & 1 & 1 & 1 & 1 & 1 & 1 & 1
\end{array} \right). \]}
Next we apply  \cite[Theorem 2.1]{PTV} to construct $L$, a generalized Lawrence matrix of size $m\times 17$
and rank $6$, such that its bouquet ideal is the toric ideal of $A_s$. (We should have 12 bouquets and 17-vectors.)
Take any 12 integer vectors, $(c'_{i1},\cdots , c'_{im_i})$, each of them with all of their coordinates nonzero, the first coordinate positive and the greatest common divisor of its
coordinates equal to 1, having all together (the 12 vectors) exactly 17 coordinates, for example $(1,-1), (1), (1,-1), (1), (1),$ $(1,11), (1), (1), (3, 7, 2021), (1), (1), (1)$.

We compute an arbitrary integer solution for each equation
$1=\lambda_{i1}c'_{i1}+\cdots+\lambda_{im_i}c'_{im_i}$. For example, for the vector $(1,-1)$: $1=1\cdot 1+0\cdot (-1)$, for $(1, 11)$: $1= 1\cdot 1 +0\cdot 11$
and for $ (3, 7, 2021)$: $ 1=(-2)\cdot 3+ 1\cdot 7+0\cdot 2021$. The  generalized Lawrence matrix $L$ is constructed as follows (see \cite[Example 2.4]{PTV} for another detailed example)
{\footnotesize
\[
L =
\left( \begin{array}{ccccccccccccccccc}
1 & 0 & s & s^2-s & 0 & s^2-1 & 1  & 1 & 0 & 1 & 1 & -2 & 1 & 0 & 1 & 1 & 1 \\
1 & 1 & 0 & 0 & 0 & 0 & 0 & 0 & 0 & 0 & 0 & 0 & 0 & 0 & 0 & 0 & 0 \\
0 & 0 & 0 & 1 & 1 & 0 & 0 & 0 & 0 & 0 & 0 & 0 & 0 & 0 & 0 & 0 & 0 \\
0 & 0 & 0 & 0 & 0 & 0 & 0 & -11 & 1 & 0 & 0 & 0 & 0 & 0 & 0 & 0 & 0 \\
0 & 0 & 0 & 0 & 0 & 0 & 0 & 0 & 0 & 0 & 0 & -7 & 3 & 0 & 0 & 0 & 0 \\
0 & 0 & 0 & 0 & 0 & 0 & 0 & 0 & 0 & 0 & 0 & -2021 & 0 & 3 & 0 & 0 & 0 \\
\end{array} \right) .
\]}
Finally, Theorem~\ref{Markovcomplexity} states that the $6\times 17$ constructed
matrix has Markov complexity at least $s$. }
\end{Example}

Since the minimal Markov bases of $A^{(r)}$ are inside the Graver basis of $A^{(r)}$ \cite[Section 7]{St},
then Theorem~\ref{final} leads to the following corollary. The only case missing is  the case of
$1\times 3$ whose proof is in \cite[Theorem 4.2]{HaraThomaVladoiu14}.

\begin{Corollary}
 Graver complexity of $m\times n$ matrices of rank $d$ may be arbitrarily large, for $n\geq 3$ and $d\leq n-2$.
\end{Corollary}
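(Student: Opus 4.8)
The plan is to deduce the corollary from Theorem~\ref{final}, the inclusions of bases recorded in Section~\ref{section:preliminaries}, and the previously known $1\times 3$ case. Recall that for every integer matrix $A$ and every $r\ge 2$ the higher Lawrence lifting $A^{(r)}$ is homogeneous for a positive grading, since the all-ones row lies in the row span of $A^{(r)}$; hence $\MM(A^{(r)})\subseteq\MG(A^{(r)})$. In particular, if a minimal Markov basis of $A^{(r)}$ contains an element of type $t$, then that element belongs to $\MM(A^{(r)})$ and therefore to $\MG(A^{(r)})$, so the Graver basis of $A^{(r)}$ contains an element of type $t$ as well. Taking the supremum over all $r$, this shows that the Graver complexity of $A$ is at least its Markov complexity. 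Consequently, any family of $m\times n$ matrices of rank $d$ with unbounded Markov complexity is automatically a family with unbounded Graver complexity.

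First I would dispose of the cases $n\ge 4$. Fix such an $n$ and any $d$ with $d\le n-2$. By Theorem~\ref{final} there is a sequence of $m\times n$ matrices of rank $d$ whose Markov complexities are unbounded, and by the reduction above their Graver complexities are then unbounded as well. This handles every pair $(n,d)$ with $n\ge 4$ and $d\le n-2$.

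There remains the single case $n=3$. Here the hypothesis $d\le n-2=1$ forces $d=1$ (the rank of a nonzero matrix being at least $1$), so what must be shown is that $1\times 3$ matrices can have arbitrarily large Graver complexity; this is exactly \cite[Theorem 4.2]{HaraThomaVladoiu14}, and invoking it completes the proof.

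The point worth stressing — and the only genuinely non-formal ingredient — is that this last case does \emph{not} follow from Theorem~\ref{final} or from the monotonicity of complexities under the bouquet construction (Theorem~\ref{Markovcomplexity}): by \cite{HaraThomaVladoiu14} the Markov complexity of any $1\times 3$ matrix is at most $3$, so the inequality ``Markov complexity $\le$ Graver complexity'' used above is very far from an equality here, and one genuinely needs the separate Graver-specific construction of \cite{HaraThomaVladoiu14}. Once that input is in hand, the corollary holds for all $n\ge 3$ and $d\le n-2$.
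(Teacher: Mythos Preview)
Your proof is correct and follows essentially the same approach as the paper: for $n\ge 4$ you combine Theorem~\ref{final} with the inclusion $\MM(A^{(r)})\subseteq\MG(A^{(r)})$, and for $n=3$ you invoke \cite[Theorem 4.2]{HaraThomaVladoiu14}. Your added paragraph explaining why the $1\times 3$ case genuinely requires a separate Graver-specific argument is a nice clarification not present in the paper.
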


\begin{Example} \label{Ex2} {\em Note that \cite[Theorem 4.2]{PTV2} can be used to produce examples of $0/1$ matrices with Markov (and Graver) complexity arbitrarily high. The problem is that for these examples the size of the matrices also increases. For example, one can use the matrix
{\small
\[
A_n= \begin{pmatrix}
0 & 1 & n-1  & n \\
1 & 1 & 1 & 1
\end{pmatrix}
\]} and the results of  Section~\ref{Section: MainResult} to produce a $0/1$ matrix of size $(2n+5)\times(2n+5)$ and rank $2n+3$, whose Markov complexity is at least $n$. Concrete, for $n=5$ the matrix is $${\small\begin{pmatrix}
0 & 1 & 4  & 5 \\
1 & 1 & 1 & 1
\end{pmatrix}}$$
and the corresponding $0/1$ matrix with Markov complexity at least 5 is
{\footnotesize
\[
L =
\left( \begin{array}{ccccccccccccccc}
0 & 0 & 0 & 1 & 1 & 1 & 1 & 1 & 0 & 1 & 1 & 1 & 1 & 1 & 0  \\
0 & 0 & 1 & 0 & 1 & 1 & 1 & 1 & 0 & 1 & 1 & 1 & 1 & 1 & 0  \\
0 & 0 & 1 & 0 & 0 & 1 & 1 & 1 & 1 & 1 & 1 & 1 & 1 & 1 & 0  \\
0 & 0 & 1 & 0 & 1 & 0 & 1 & 1 & 1 & 1 & 1 & 1 & 1 & 1 & 0  \\
0 & 0 & 1 & 0 & 1 & 1 & 0 & 1 & 1 & 1 & 1 & 1 & 1 & 1 & 0  \\
0 & 0 & 1 & 0 & 1 & 1 & 1 & 0 & 1 & 1 & 1 & 1 & 1 & 1 & 0  \\
0 & 0 & 1 & 0 & 1 & 1 & 1 & 1 & 0 & 1 & 1 & 1 & 1 & 1 & 0  \\
0 & 0 & 1 & 0 & 1 & 1 & 1 & 1 & 0 & 0 & 1 & 1 & 1 & 1 & 1  \\
0 & 0 & 1 & 0 & 1 & 1 & 1 & 1 & 0 & 1 & 0 & 1 & 1 & 1 & 1  \\
0 & 0 & 1 & 0 & 1 & 1 & 1 & 1 & 0 & 1 & 1 & 0 & 1 & 1 & 1  \\
0 & 0 & 1 & 0 & 1 & 1 & 1 & 1 & 0 & 1 & 1 & 1 & 0 & 1 & 1  \\
0 & 0 & 1 & 0 & 1 & 1 & 1 & 1 & 0 & 1 & 1 & 1 & 1 & 0 & 1  \\
0 & 0 & 1 & 0 & 1 & 1 & 1 & 1 & 0 & 1 & 1 & 1 & 1 & 1 & 0  \\
1 & 0 & 1 & 0 & 1 & 0 & 0 & 0 & 0 & 1 & 0 & 0 & 0 & 0 & 0  \\
0 & 1 & 1 & 0 & 1 & 0 & 0 & 0 & 0 & 1 & 0 & 0 & 0 & 0 & 0
\end{array} \right) ,
\]}
which is a $15\times 15$ matrix of rank $13$ (see also \cite[Example 4.1]{PTV2} for details). } \end{Example}

\begin{Remark} {\em
Note that Theorem \ref{final} and Example \ref{Ex1} shows that to construct
a matrix that has Markov complexity $s$, using the techniques of this paper, the maximum absolute value of the entries of the matrix will be $s$ or $s^2-1$. Of course one can use the techniques of the paper \cite{PTV2}
 to produce examples of 0/1-matrices with Markov complexity $s$,
 like in the Example~\ref{Ex2}.
 But in this case the size of the matrix increases linearly on $s$.}
\end{Remark}

\section{Bounds on Markov complexity}

\medskip

The above results show that if we only bound the matrix dimensions then the Markov and Graver
complexities can be arbitrarily large. In contrast, we now show that if we also bound the
matrix entries, these complexities become bounded. We use recent results on sparse
integer programming from \cite{EHKKLO,KLO,KO}. We need some definitions.
The {\em height} of a rooted tree is the maximum number of vertices on a path from the
root to a leaf. Given a graph $G=(V,E)$, a rooted tree on $V$ is {\em valid} for $G$ if
for each edge $\{j,k\}\in E$ one of $j,k$ lies on the path from the root to the other.
The {\em tree-depth} $\td(G)$ of $G$ is the smallest height of a rooted tree which is valid for $G$.
For instance, if $G=([2m],E)$ is a perfect matching with $E=\{\{i,m+i\}:i\in[m]\}$
then its tree-depth is $3$ where a tree validating it rooted at $1$ has edge set
$E\uplus\{\{1,i\}:i=2,\dots,m\}$. The graph of an $m\times n$ matrix $A$ is the graph $G(A)$ on $[n]$
where ${j,k}$ is an edge if and only if there is an $i\in[m]$ such that $A_{i,j}A_{i,k}\neq 0$.
The {\em tree-depth} of $A$ is the tree-depth $\td(A):=\td(G(A))$ of its graph.

The tree-depth is a parameter playing a central role in sparsity, see \cite{NO}.
The results of \cite{EHKKLO,KLO,KO} assert that {\em sparse integer programming},
where the defining matrix $A$ has small tree-depth and hence is sparse, is efficiently solvable.
One of the key ingredients in the efficient solution is that there exists a bound on the $1$-norm
of the elements of the Graver basis of $A$ which is a computable function depending only on
$a:=\|A\|_\infty:=\max_{i,j}|A_{i,j}|$, the maximum absolute value of any entry of $A$,
and $t:=\td(A^T)$, the tree-depth of its transpose. The currently best bound is in
\cite[Lemma 28]{EHKKLO} and \cite[Lemma 11]{KPW}, as follows. 

\begin{Proposition}
\label{norm_bound}
For any integer matrix $A$,
any Graver basis element $x\in\MG(A)$ satisfies
$$\|x\|_1\ \leq\ (2a+1)^{{2^t}-1}\ .$$
\end{Proposition}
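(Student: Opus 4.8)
The plan is to prove the bound by induction on $t=\td(A^T)$, exploiting the fact that small \emph{dual} tree-depth forces a recursive block structure on $A$. Write $G(A^T)$ for the graph on the row index set $[m]$ in which $\{i,i'\}$ is an edge whenever some column of $A$ is nonzero in both row $i$ and row $i'$; its tree-depth is $t$. First I would reduce to the case that $G(A^T)$ is connected: the connected components of $G(A^T)$ yield, after permuting columns, a block-diagonal form of $A$ with blocks $A^{(1)},\dots,A^{(q)}$ together with some all-zero columns, so that $\MG(A)$ is the union of the zero-padded sets $\MG(A^{(\ell)})$ and the unit vectors attached to the zero columns. Since tree-depth does not increase under passing to induced subgraphs and the unit vectors have $1$-norm $1\le 2a+1$, it suffices to treat each connected piece.

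Fix a valid rooted tree $F$ for $G(A^T)$ of height $t$, with root $r$. In the base case $t=1$ the graph $G(A^T)$ has no edges, so, discarding zero columns, $A$ is a single row $c^\top$ with $\|c\|_\infty\le a$; here I would use an elementary Steinitz/pigeonhole argument: unpack a Graver element $g$ of $c^\top x=0$ into a list of $\|g\|_1$ integers in $[-a,a]$ summing to $0$, reorder it so that every partial sum stays in $[-a,a]$, and observe that a repeated partial sum produces a nonempty proper conformal summand of $g$ lying in the kernel unless $\|g\|_1\le 2a+1=(2a+1)^{2^1-1}$. For the inductive step $t\ge 2$, deleting $r$ breaks $G(A^T)$ into components $V_1,\dots,V_p$, each of tree-depth $\le t-1$. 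The combinatorial key is that the nonzero rows of any single column of $A$ form a clique of $G(A^T)$, hence a chain of $F$; consequently every column of $A$ has all of its nonzero entries inside $\{r\}\cup V_i$ for exactly one $i$, or inside $\{r\}$ alone. Permuting columns accordingly, $A$ acquires a block-arrow form with one ``linking'' row $r$ on top, blocks $A_i$ (rows $V_i$, columns $C_i$) below satisfying $\td(A_i^T)\le t-1$ and $\|A_i\|_\infty\le a$, and a block $C_0$ of columns meeting only row $r$.

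Now take $x\in\MG(A)$ and split it by these column blocks, $x=(x^{(1)},\dots,x^{(p)},x^{(0)})$. Each $x^{(i)}$ lies in $\Ker_{\ZZ}(A_i)$, hence is a conformal sum, with multiplicity, of elements of $\MG(A_i)$ (a standard property of Graver bases, see \cite[Chapter 4]{St}), and $x^{(0)}$ is a conformal sum of Graver elements of the single-row matrix on $C_0$; zero-padding, this exhibits $x$ as a conformal sum of $N$ pieces $\gamma$, each with $\|\gamma\|_1\le M:=(2a+1)^{2^{t-1}-1}$ by the inductive hypothesis (the pieces coming from $C_0$ have $1$-norm $\le 2a+1\le M$). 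The crux is to bound $N$. Each piece $\gamma$ satisfies $A\gamma=\beta(\gamma)\,e_r$ for an integer scalar $\beta(\gamma)$ with $|\beta(\gamma)|\le aM=:K$, and $\sum_\gamma\beta(\gamma)=0$ because $\sum_\gamma\gamma=x\in\Ker_{\ZZ}(A)$. Reordering the pieces by the one-dimensional Steinitz lemma so that all partial sums of the $\beta(\gamma)$ stay in $[-K,K]$, any repeated partial sum singles out a nonempty proper sub-multiset $S$ of pieces with $\sum_{\gamma\in S}\beta(\gamma)=0$; then $\sum_{\gamma\in S}\gamma\in\Ker_{\ZZ}(A)$, it is nonzero (a conformal sum of nonzero vectors), and it is a proper conformal summand of $x$ — contradicting $x\in\MG(A)$. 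A short pigeonhole count turns this into $N\le 2K+1=2aM+1$, whence
\[
\|x\|_1=\sum_\gamma\|\gamma\|_1\le(2aM+1)\,M\le(2a+1)M^2=(2a+1)^{\,1+2(2^{t-1}-1)}=(2a+1)^{2^t-1}.
\]

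I expect the main obstacle to be the bound on the number $N$ of pieces. The delicate point is that a conformal sub-sum of the pieces need not lie in $\Ker_{\ZZ}(A)$ — only the linking row $r$ can be violated — which is precisely why one must track only the scalars $\beta(\gamma)$ and invoke the Steinitz reordering, and why one must ensure the sub-multiset $S$ extracted from a repeated partial sum is simultaneously nonempty and proper, so that the contradiction with conformal minimality is genuine. A smaller but essential technical check is the clique/chain argument behind the block-arrow form: that deleting the root keeps each prospective block connected and does not raise its tree-depth, and that $G(A_i^T)$ really coincides with the induced subgraph $G(A^T)[V_i]$ so that the inductive hypothesis applies to $A_i$.
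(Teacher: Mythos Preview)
The paper does not give its own proof of Proposition~\ref{norm_bound}; it quotes the bound from \cite[Lemma~28]{EHKKLO} and \cite[Lemma~11]{KPW} and uses it as a black box in Theorem~\ref{complexity_bound}. Your proposal is precisely the argument of those references: induction on the dual tree-depth $t=\td(A^T)$, reduction to a connected dual graph, the block-arrow decomposition obtained by deleting the root of a minimum-height valid tree, conformal splitting of a Graver element into pieces supported on the sub-blocks (each bounded by the inductive hypothesis), and the one-dimensional Steinitz/pigeonhole argument to cap the number $N$ of pieces by $2aM+1$. So you are not diverging from the paper --- you are supplying the proof the paper outsourced, along the same lines as the cited sources.

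One genuine imprecision to fix: the block $x^{(0)}$ supported on the columns $C_0$ meeting only row $r$ need \emph{not} lie in the kernel of the single-row matrix $(A_{r,j})_{j\in C_0}$, since row $r$ is the linking row and only the total sum $\sum_i (\text{row }r)|_{C_i}\cdot x^{(i)} + (\text{row }r)|_{C_0}\cdot x^{(0)}$ vanishes. Hence you cannot write $x^{(0)}$ as a conformal sum of Graver elements of that one-row matrix. The easy repair is to decompose $x^{(0)}$ conformally into signed unit vectors $\pm e_j$: each such piece $\gamma$ has $\|\gamma\|_1=1\le M$ and $|\beta(\gamma)|=|A_{r,j}|\le a\le aM=K$, so your Steinitz count $N\le 2K+1$ and the final estimate $\|x\|_1\le(2aM+1)M\le(2a+1)M^2=(2a+1)^{2^t-1}$ go through verbatim. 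With that correction the argument is complete.
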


\vskip.2cm
We now prove the bound on the Markov complexity and Graver complexity.

\begin{Theorem}
\label{complexity_bound}
The Markov and Graver complexities of any integer $m\times n$ matrix $A$ satisfy 
$$m(A)\ \leq\ g(A)\ \leq\ (2a+1)^{{4^n}-1}+1\ .$$
\end{Theorem}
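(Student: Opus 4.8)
The plan is to bound the Graver complexity $g(A)$ by relating it to a Graver basis norm bound for the higher Lawrence liftings $A^{(r)}$, using Proposition~\ref{norm_bound}. Recall that $g(A)$ is the largest type of any element in $\MG(A^{(r)})$ as $r$ varies, and since the type of a matrix in $\Ker_{\ZZ}(A^{(r)})$ is its number of nonzero rows, which is at most its $1$-norm, we have $g(A)\leq \sup_r \max\{\|x\|_1 : x\in\MG(A^{(r)})\}$. So it suffices to bound $\|x\|_1$ uniformly in $r$ for Graver elements of $A^{(r)}$, and to feed that into the definition of $g(A)$ and the chain $m(A)\leq g(A)$ (which follows from $\MM(A^{(r)})\subseteq\MG(A^{(r)})$ and the fact that type is preserved, as recorded in the excerpt).

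The key point is that Proposition~\ref{norm_bound} applied to $A^{(r)}$ gives $\|x\|_1\leq (2a'+1)^{2^{t'}-1}$ where $a'=\|A^{(r)}\|_\infty$ and $t'=\td((A^{(r)})^T)$. First I would observe that $\|A^{(r)}\|_\infty=\max(a,1)\leq a$ when $a\geq 1$ (and the bound is trivially true when $A=0$), since $A^{(r)}$ consists of copies of $A$ and identity blocks. The crucial step is bounding $\td((A^{(r)})^T)$ independently of $r$. The transpose $(A^{(r)})^T$ is an $rn\times(rm+n)$ matrix; its graph $G((A^{(r)})^T)$ is on the vertex set $[rn]$ indexed by the $r$ copies of the $n$ columns. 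Two such column-indices $(i,s)$ and $(j,s')$ are adjacent iff some row of $A^{(r)}$ is nonzero in both positions. The rows coming from the $s$-th diagonal block of $A$ connect $(i,s)$ to $(j,s)$ exactly when $G(A)$ has edge $\{i,j\}$; the rows from the bottom identity strip connect $(i,s)$ to $(i,s')$ for all $s,s'$. Hence $G((A^{(r)})^T)$ is obtained by taking $r$ disjoint copies of $G(A)$ and then, for each $i\in[n]$, joining all $r$ copies of vertex $i$ into a clique. I would then show $\td$ of this graph is at most $\td(G(A))+(\text{something like } n)$, or more crudely at most $2n$: one can build a valid rooted tree by first laying down a path through one representative of each of the $n$ vertex-classes that validates all the "vertical" cliques and the first copy of $G(A)$, then hanging the remaining copies below; a clean bound such as $\td((A^{(r)})^T)\leq 2n$ (uniform in $r$ and $m$) is what I am after. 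Plugging $t'\leq 2n$ into Proposition~\ref{norm_bound} yields $\|x\|_1\leq (2a+1)^{2^{2n}-1}=(2a+1)^{4^n-1}$ for every $x\in\MG(A^{(r)})$, uniformly in $r$.

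Combining, every type appearing in any $\MG(A^{(r)})$ is at most $(2a+1)^{4^n-1}$, so $g(A)\leq (2a+1)^{4^n-1}$; the stated bound has a ``$+1$'' which gives slack (and in fact one could even phrase the Markov side via $m(A)\leq g(A)$ directly, so the $+1$ is harmless). Finally $m(A)\leq g(A)$ from the inclusion $\MM(A^{(r)})\subseteq\MG(A^{(r)})$ together with type-preservation, completing the chain $m(A)\leq g(A)\leq (2a+1)^{4^n-1}+1$.

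I expect the main obstacle to be the tree-depth estimate for $(A^{(r)})^T$: one must identify the graph $G((A^{(r)})^T)$ correctly (paying attention to transposition and to the fact that tree-depth is taken of the transpose in Proposition~\ref{norm_bound}), and then prove a bound on its tree-depth that is genuinely independent of $r$ and $m$ and clean enough to produce the exponent $4^n-1=2^{2n}-1$. The rest — the $\|\cdot\|_\infty$ bound, the passage from $1$-norm to type, and the $m(A)\leq g(A)$ inequality — is routine given the results already assembled in the excerpt.
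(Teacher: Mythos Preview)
Your overall plan is exactly the paper's: apply Proposition~\ref{norm_bound} to $B=A^{(r)}$, bound $\td(B^T)$ by $2n$ uniformly in $r$, and conclude that every Graver element has $1$-norm (and hence type) at most $(2a+1)^{4^n-1}$. However, the execution of the tree-depth step is wrong, and the error is exactly the one you flagged as the main obstacle.

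By definition, $G(M)$ is on the \emph{column} set of $M$. Since $(A^{(r)})^T$ is $rn\times(rm+n)$, the graph $G((A^{(r)})^T)$ lives on the $rm+n$ \emph{rows} of $A^{(r)}$, not on the $rn$ columns. What you wrote down---vertices the column-indices $(i,s)$ of $A^{(r)}$, with an edge whenever some row of $A^{(r)}$ is nonzero in both---is $G(A^{(r)})$, not $G((A^{(r)})^T)$. This mix-up is fatal: in your graph each identity-row $c_i$ puts a clique on $\{(i,1),\dots,(i,r)\}$, so $K_r$ is a subgraph and hence $\td\!\big(G(A^{(r)})\big)\geq r$. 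Your proposed tree (a path through one representative per class with the other copies hung below) does not validate these $r$-cliques, and no tree of height independent of $r$ can. So the bound $t'\leq 2n$ fails for the graph you described.

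The correct graph $G((A^{(r)})^T)$ has vertices $b^s_j$ (the $A$-block rows, $s\in[r]$, $j\in[m]$) and $c_k$ (the identity rows, $k\in[n]$), and the crucial feature is that $b^s_j$ and $b^{s'}_{j'}$ are \emph{never} adjacent when $s\neq s'$, because their supports lie in disjoint column-blocks. Thus the $r$ ``copies'' are only linked through the $n$ vertices $c_1,\dots,c_n$, and a path $c_1,\dots,c_n$ with each chain $b^s_1,\dots,b^s_m$ hung below $c_n$ is valid of height $n+m$. The paper first discards redundant rows of $A$ to get $m\leq n$, yielding $\td(B^T)\leq 2n$; you should include this reduction as well. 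Finally, note the ``$+1$'' is not mere slack: when $a=0$ one has $g(A)=2$ while $(2a+1)^{4^n-1}=1$, so the extra $1$ is needed precisely to cover the zero-matrix case.
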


\begin{proof}
Since $m(A)\leq g(A)$ for any matrix, is suffices to prove the bound on $g(A)$.
Thus, we need to show that for any $r$, the type of any element $x\in\MG(A^{(r)})$, that is,
the number of nonzero rows of $x$, regarded as an $r\times n$ matrix with rows
$x^1,\dots,x^r\in\ZZ^n$, satisfies the bound. First, note that if $a=0$, that is, $A$ is the
zero $m\times n$ matrix, then $\MG(A^{(r)})$ consists precisely of those matrices $x$ which
for some $1\leq i\neq k\leq r$ and some $1\leq j\leq n$, have $x^i$ equal to the $j$th
unit vector in $\ZZ^n$ and $x^k=-x^i$, and all other rows are zero. So the type
of each such matrix is $2$ and therefore $g(A)=2$ and obeys the claimed bound.      

So assume from now on that $a\geq 1$. Removing redundant rows of $A$ does not change the
kernel of $A^{(r)}$ and hence also does not change $\MG(A^{(r)})$, so we may assume $m\leq n$.
Now, consider any $r$ and the matrix $B:=A^{(r)}$ and its transpose $B^T$. We index the rows
of $B=A^{(r)}$ naturally by $b^i_j$ for $i=1,\dots,r$ and $j=1,\dots,m$, followed by $c_k$
for $k=1,\dots,n$. Then the $b^i_j$ and $c_k$ are the vertices of the graph $G(B^T)$
of the transpose of the matrix $B$.

We define a rooted tree $T$ on the $b^i_j$ and $c_k$
as follows. We let the root be $c_1$. We include the path $c_1,c_2,\dots,c_n$.
For $i=1,\dots,r$ we include the edge $\{c_n,b^i_1\}$ and the path $b^i_1,b^i_2,\dots,b^i_m$.
We now claim that $T$ is valid for the graph $G(B^T)$. Consider any two distinct rows of $B$.
If they are some $c_k,c_l$ with $k<l$ then $c_k$ is on the path from the root to $c_l$.
If they are some $c_k$ and $b^i_j$ then $c_k$ is on the path from the root to $b^i_j$.
If they are some $b^i_k$ and $b^i_l$ with $k<l$ then $b^i_k$ is on the path from the root to $b^i_l$.
Finally, suppose they are $b^i_j$ and $b^k_l$ with $i\neq k$. Then for any column $s$ of
$B=A^{(r)}$, either $B_{b^i_j,s}=0$ or $B_{b^k_l,s}=0$, and therefore $\{b^i_j,b^k_l\}$ is
{\em not} an edge of $G(B^T)$. Therefore $T$ is valid for $G(B^T)$.

Since the height of $T$ is $n+m\leq 2n$, we find that $\td(B^T)=\td(G(B^T))\leq 2n$.
Also, since $a\geq 1$, we have that $\|B\|_\infty=a$. Therefore, by Proposition \ref{norm_bound}
applied to $B$, any Graver basis element $x\in\MG(A^{(r)})$ satisfies
$\|x\|_1\leq (2a+1)^{{2^{2n}}-1}=(2a+1)^{{4^n}-1}$. Regarding again $x\in\MG(A^{(r)})$ as an
$r\times n$ matrix with rows $x^1,\dots,x^r$, and noting that $x$ is integer matrix and therefore
$x^k\neq 0$ implies $\|x^k\|_1\geq 1$, we find that the type $|\{k:x^k\neq 0\}|$ of $x$ satisfies
$$|\{k:x^k\neq 0\}|\ \leq\ \sum_{k=1}^n \|x^k\|_1\ =\ \|x\|_1\ \leq\ (2a+1)^{{4^n}-1}\ .$$
So the type of any element $x\in\MG(A^{(r)})$ for any $r$ satisfies the claimed bound.
Since the Graver complexity $g(A)$ is the maximum such type, it satisfies the bound as well.
\end{proof}

As the Markov complexity $m(A)$ may be much smaller than the Graver complexity $g(A)$,
it is interesting to ask whether the bound on the Markov complexity can be substantially reduced,
say, to a bound which is singly, rather than doubly, exponential in $n$.

\begin{Question}
What is the best upper bound $f(a,n)$ on the Markov complexity
of any integer $m\times n$ matrix with $\|A\|_\infty=a$?
\end{Question}

Since applications of the Markov and Graver complexities are mainly for 0/1-matrices,
see \cite{AT, BerOnn, DO, DS, DSS, FinHem, HS, KudTak, OH, Onn, SS},
it is also interesting to find the best bound for $0/1$ matrices.
Note that for $0/1$ matrices $a:=\|A\|_\infty=1$ by definition.

\begin{Question}
\label{upper_ko}
What is the best upper bound $h(n)$ on the Markov complexity of any $0/1$ valued $m\times n$ matrix?
\end{Question}

We note that in \cite{TaThVl} it is proved that certain $0/1$-matrices have Markov complexity
bounded above by $n-d$. We also note that, while the bound $h(n)$ for $0/1$ matrices might be
smaller than the above bound $3^{4^n-1}$, it is known that even for $0/1$ matrices, Markov bases can
be arbitrarily complicated, and when one allows the dimensions $m$, $n$, and $r$ to grow, the
Markov bases of $A^{(r)}$ can include arbitrarily prescribed integer vectors, see \cite{DO} and \cite[Section 3]{PTV2}.

\bigskip
{\bf Acknowledgments.} The first author was supported by a grant from the Israel Science Foundation
and by the Dresner chair.
The third author has been partially supported by the grant PN-III-P4-ID-PCE-2020-0029, within PNCDI III, financed by Romanian Ministry of Research and Innovation, CNCS - UEFISCDI.


\begin{thebibliography}{}



\bibitem{AT} S.\ Aoki, A.\ Takemura, 
Minimal basis for connected Markov chain over $3\times 3\times K$ contingency tables with
fixed two dimensional marginals, Australian nd New Zealand Journal of Statistics, {\bf 45}, 229--249 (2003).

\bibitem{BerOnn}
Y.\ Berstein, S.\ Onn, The Graver complexity of integer programming, Annals of Combinatorics 13, no. 3, 289--296  (2009).

\bibitem{CKT} H.\ Charalambous, A.\ Katsabekis, A.\ Thoma,
Minimal systems of binomial generators and the indispensable complex of a toric ideal, Proceedings of the American 
Mathematical Society {\bf 135}, 3443--3451  (2007).

\bibitem{HaraThomaVladoiu14}
H.\ Charalambous, A.\ Thoma, M.\ Vladoiu,   Markov complexity of monomial curves, Journal of Algebra {\bf 417}, 391--411 (2014).

\bibitem{HaraThomaVladoiu15}
H.\ Charalambous, A.\ Thoma, M.\ Vladoiu,  Markov bases and generalized Lawrence liftings, Annals of Combinatorics {\bf 19}, 661--669 (2015).

\bibitem{DO}
J.\ De Loera, S.\ Onn,
Markov bases of three-way tables are arbitrarily complicated,
Journal of Symbolic Computation {\bf 41}, 173--181 (2006).

\bibitem{DS} P.\ Diaconis, B.\ Sturmfels, Algebraic algorithms for sampling from conditional distributions, Annals of Statistics {\bf 26}, 363--397  (1998).

\bibitem{DSS} M.\ Drton, B.\ Sturmfels, S.\ Sullivant, {\em Lectures on algebraic statistics}, Oberwolfach Seminars, 39. Birkh\"auser Verlag, Basel, viii+171 pp (2009).

\bibitem{EHKKLO}
F.\ Eisenbrand, C.\ Hunkenschr\"oder, K.M.\ Klein, M.\ Kouteck\'y, A.\ Levin, S.\ Onn,
An algorithmic theory of integer programming,
ArXiv:1904.01361 1--63 (2019).

\bibitem{FinHem}
E.\ Finhold, R.\ Hemmecke, Lower bounds on the Graver complexity of M-fold matrices. Annals of Combinatorics {\bf 20}, no. 1, 73--85  (2016).

\bibitem{HS} S.\ Ho\c sten, S.\ Sullivant, A finiteness theorem for Markov bases of hierarchical models, Journal of Combinatorial Theory Series A {\bf 114}, 311--321  (2007).

\bibitem{KPW}
D.\ Knop, M.\ Pilipczuk, M.\ Wrochna,
Tight Complexity Lower Bounds for Integer Linear Programming with Few Constraints,
ACM Transactions on Computation Theory {\bf 12} 19:1--19:19 (2020).

\bibitem{KT} D. \ Kosta, A. \ Thoma, Unboundedness of Markov complexity of monomial curves in $A^n$ for $n\geq 4$, 
Journal of Pure  and Applied Algebra {\bf 224}, 106249 (2020).

\bibitem{KLO}
M.\ Kouteck\'y, A.\ Levin, S.\ Onn,
A parameterized strongly polynomial algorithm for block structured integer programs,
Proceedings of ICALP 2018, Leibniz International Proceedings in Informatics {\bf 107}, 85:1--85:14 (2018).

\bibitem{KO}
M.\ Kouteck\'y, S.\ Onn,
Sparse integer programming is FPT,
Bulletin of the European Association for Theoretical Computer Science {\bf 134}, 69--71 (2021).


\bibitem{KudTak}
T.\ Kudo, A.\ Takemura, A lower bound for the Graver complexity of the incidence matrix of a complete bipartite graph, 
Journal of Combinatorics  {\bf 3}, no. 4, 695--708 (2012).

\bibitem{NO}
J.\ Ne\v{s}et\v{r}il, P.\ Ossona de Mendez,
Sparsity: Graphs, Structures, and Algorithms,
Algorithms and Combinatorics, Springer (2012).

\bibitem{OH} H.\ Ohsugi, T.\ Hibi,  Toric ideals arising from contingency tables, Proceedings of the Ramanujan Mathematical Society's Lecture Notes Series,   87--111, (2006).

\bibitem{Onn} S.\ Onn, Nonlinear Discrete Optimization: An Algorithmic Theory, EMS Publishing House, (2010).

\bibitem{PTV} S. \ Petrovi\'{c},  A.\ Thoma, M.\ Vladoiu, Bouquet algebra of toric ideals, Journal of Algebra {\bf 512}, 493-525 (2018).

\bibitem{PTV2} S. \ Petrovi\'{c},  A.\ Thoma, M.\ Vladoiu, Hypergraph encodings of arbitrary toric ideals, Journal of Combinatorial Theory, Series A {\bf 166},  11--41 (2019).

\bibitem{SS} F.\ Santos, B.\ Sturmfels, Higher Lawrence configurations, Journal of Combinatorial Theory Series A {\bf 103}, 151--164 (2003).

\bibitem{St} B.\ Sturmfels, {\em Gr{\"o}bner Bases and Convex Polytopes}, University Lecture Series, No 8, MAS, R.I. (1995).

\bibitem{TaThVl} Ch.\ Tatakis, A.\ Thoma, M.\ Vladoiu, Toric ideals of $0/1-$matrices with Markov complexity 
bounded above by their height, preprint.


\end{thebibliography}
\end{document}